\pgfplotsset{compat=newest}
\theoremstyle{plain}
\newtheorem{theorem}{Theorem}[section]
\newtheorem{prop}[theorem]{Proposition}
\newtheorem{thm}[theorem]{Theorem}
\newtheorem*{thm*}{Theorem}
\newtheorem{lemma}[theorem]{Lemma}
\theoremstyle{definition}
\newtheorem{defn}[theorem]{Definition}
\newtheorem{rmk}[theorem]{Remark}
\newtheorem{example}[theorem]{Example}
\def\C{\mathbb{C}}
\def\Z{\mathbb{Z}}
\def\I{\mathcal{I}}
\def\j{\mathbf{j}}
\def\K{\mathcal{K}}
\def\O{\mathcal{O}}
\def\del{\operatorname{del}}
\def\link{\operatorname{link}}
\def\span{\operatorname{Span}}
\def\init{\operatorname{init}}
\def\ord{\operatorname{ord}}
\def\Im{\operatorname{Im}}
\def\id{\operatorname{id}}
\def\ess{\operatorname{Ess}}
\def\LT{\operatorname{LT}}
\def\Dem{\operatorname{Dem}}
\def\bsswap{\mathsf{bsswap}}
\newcommand{\tup}[1]{\langle{#1}\rangle}
\title[A GB for Kazhdan-Lusztig ideals of the flag variety of affine type $A$]{A Gr\"obner basis for  Kazhdan-Lusztig ideals of the  flag variety of affine type $A$}
\author{Bal\'azs Elek}
\author{Daoji Huang}
\thanks{Daoji Huang was partially supported by NSF grant DMS-2202900}
\date{}
\begin{document}
\maketitle
\begin{abstract}
A Kazhdan-Lusztig variety is the intersection of a locally-closed Schubert cell with an opposite Schubert variety in a flag variety. We present a linear parametrization of the Schubert cells in the affine type A flag variety via Bott-Samelson maps, and give explicit equations that generate the Kazhdan-Lusztig ideals in these coordinates. Furthermore, our equations form a Gröbner basis for the Kazhdan-Lusztig ideals. Our result generalizes a result of Woo-Yong that gave a Gröbner basis for Kazhdan-Lusztig ideals in the type $A$ flag variety.
\end{abstract}
\section{Introduction}

In \cite{woo2012grobner}, Woo and Yong gave a Gr\"obner basis for Kazhdan-Lusztig ideals in the type $A$ flag variety, generalizing the Gr\"obner basis theorem \cite[Theorem B]{knutson2005grobner} for Schubert determinantal ideals. In this paper, we present a Gr\"obner basis for Kazhdan-Lusztig ideals in the affine type $A$ flag variety, generalizing the main result of \cite{woo2012grobner}. 

Our original motivation for finding explicit defining equations for Kazhdan-Lusztig varieties in the flag variety of affine type $A$ comes from the theory of Bruhat/Kazhdan--Lusztig atlases for projected Richardson varieties in the partial flag varieties. Projected Richardson varieties \cite{knutson2014projections} are projections of Richardson varieties from $G/B$ to $G/P$, where $G$ is a reductive algebraic group over an algebraically closed field, $B$ a Borel subgroup and $P$ a parabolic subgroup.  The stratification of $G/P$ by projected Richardson varieties has many nice properties; for example, the closed strata are are exactly the compatibly split subvarieties with respect to the standard Frobenius splitting on $G/P$. When $G/P$ is the Grassmannian $Gr(k,n)$, the projected Richardson varieties are also known as positroid varieties. One way to study this stratification is to cover $G/P$ with standard affine charts $U_f$ (i.e., permuted big cells) at torus-fixed points $f$, and it was shown in \cite{galashin2022regularity,KLchart} that there exist Schubert cells $\mathcal{X}^{w(f)}_\circ $ stratified by Kazhdan-Lusztig varieties $\mathcal{X}^{w(f)}_\circ\cap \mathcal{X}_v$ in the affine flag variety $\mathcal{G}/\mathcal{B}$ of the loop group $\mathcal{G}$ associated to $G$, such that $U_f$ and $\mathcal{X}^{w(f)}_\circ $ are stratified-isomorphic. The Bruhat atlas for $Gr(k,n)$ stratified by positroid varieties was first discovered in \cite{Snider}. In \cite{galashin2022regularity}, this atlas served as a crucial technical tool to establish topological properties of the totally nonnegative part of a partial flag variety. In the affine type $A$ setting, our result makes it possible to practically compute with equations defining Kazhdan-Lusztig varieties using computer software, e.g. \texttt{Macaulay2}, which gives computational access for studying local equations of positroid varieties and more generally, projected Richardson varieties on $d$-step flag varieties. 

The geometry of Kazhdan-Lusztig varieties in (Kac--Moody) flag varieties is also closely related to formulas that compute the restriction of torus-equivariant 
cohomology/$K$-theoretic Schubert classes to fixed points. Since Kazhdan-Lusztig varieties degenerate to subword complexes, their Hilbert series can be computed explicitly using combinatorics of reduced words. Under an appropriate grading induced by the torus action, their multidegree/$K$-polynomials can recover the Andersen–Jantzen–Soergel/Billey \cite{andersen1994representations,billey} and Graham/Willems \cite{Gra02, willems2004equivariant} restriction formulas. Our Gr\"obner basis completes the story that explicitly interprets these restriction formulas using Gr\"obner geometry in affine type $A$. For details, see Remark~\ref{rmk:equivariant}.

There is extensive literature on the geometry of Kazhdan-Lusztig varieties in the type $A$ setting. In \cite{LI2012633}, Li and Yong studied Hilbert–Samuel multiplicity for points of Schubert varieties in the complete flag variety by Gr\"obner degeneration of  Kazhdan–Lusztig ideals, and gave an explicit combinatorial interpretation in terms of subword complexes in the (co)vexillary case. This result was generalized by Anderson--Ikeda--Jeon--Kawago to classical types \cite{ANDERSON2023109366}.  The potential relationship between the $h$-polynomials for Kazhdan-Lusztig varieties and the celebrated Kazhdan-Lusztig polynomials was explored in \cite{li2012kazhdan}. Outside of type $A$, the Gr\"obner geometry of Kazhdan--Lusztig varieties in type $C$ has been studied by Escobar--Fink--Rajchgot--Woo \cite{escobar2024grobner}. We hope our work will provide useful tools for similar investigations in the affine type $A$ case. 

Unlike in the finite type $A$ case, the technology that expresses Schubert conditions in the flag variety as determinantal conditions on matrices was not readily available in affine type $A$. Therefore, to achieve our main result, we develop the following tools:
\begin{itemize}
    \item[(A)] An analogue of Fulton's ideals for matrix Schubert varieties \cite{fulton92} in the space of infinite periodic matrices, expressing  finite codimensional (opposite) Schubert conditions in the affine type $A$ flag variety (Theorem~\ref{thm:seteq});
    \item[(B)] A linear parametrization using the Bott-Samelson coordinates for the finite dimensional Schubert cells (Proposition~\ref{prop:linearBS}).
\end{itemize}
We obtain the defining equations for Kazhdan-Lusztig varieties by pulling back equations from (A) to coordinates determined by (B). Following the strategies developed in \cite{woo2012grobner}, we prove that these equations form a  Gr\"obner basis of the Kazhdan-Lusztig ideals by an inductive argument on subword complexes. Our proof relies on a result of Knutson, stated in Theorem~\ref{thm:knutson}, that degenerate Kazhdan-Lusztig ideals expressed in Bott-Samelson coordinates to Stanley-Reisner ideals for subword complexes.


Outside of the finite type settings, the singularities of finite-dimensional Schubert varieties in the affine Grassmannian, in type $A$ as well as all types, have been widely studied, see e.g., \cite{malkin, kuttler2009singularities, billey2010smooth, haines}. For Schubert varieties in general Kac-Moody flag varieties, a criterion for smoothness is given in \cite{KumarShrawan1996TnHr}.
On the other hand, the Gr\"obner geometry in the affine type settings has not been extensively studied by the literature. In the thesis \cite{brunson2014matrix}, Brunson investigated (set-theoretic)  equations for finite-dimensional Schubert conditions in certain matrix coordinates for the affine Grassmannian, extending the work of \cite{kreiman2004ideal}. Their work is related to but different from ours. One distinction is the finite dimensional versus codimensional conditions, which is not essentially different in the finite type case, but significantly different in the affine case. Our work shows that it is possible to work with finite-codimensional Schubert conditions explicitly via restrictions to finite-dimensional cells.
\vskip 1em

\noindent\textbf{Acknowledgements.} The authors would like to thank Shiliang Gao, Allen Knutson, Mark Shimozono, and Alex Yong for helpful conversations. We thank the anonymous referees for their very careful reading and  many helpful suggestions.

\section{Matrix realization of the affine flag variety in type A}
Our main references for this section
are \cite{magyar2002affine}, \cite{kumar2012kac}, and \cite{galashin2022regularity}.
Let $\mathcal{K}:=\C((t^{-1}))$ denote the  field of formal Laurent series, and 
$\mathcal{O}:=\C[[t^{-1}]]$ the ring of formal power series\footnote{Our choice of using $t^{-1}$ instead of $t$ as the generator is the opposite of the standard choice in the literature. This choice was made so that in our matrix model, the affine type $A$ convention is consistent with the literature on matrix Schubert varieties.}.
For $f\in \K$ where $f(t) =\sum_{i\ge N}a_i t^{-i}$ with $a_i\in \C$, and $f\neq 0$, we let
$\ord(f)$ be the smallest integer for which $a_i\neq 0$.
Fix a positive integer $n$;
let $G(\K):=GL_n(\K)$ and $G(\O):=GL_n(\O)$.  

Let $\{e_1,\cdots, e_n\}$ denote the standard $\K$-basis of $V:=\K^n$, and
for $c\in \Z$, define $e_{j+nc}:= t^c e_j$.
A \textbf{lattice} $L \subset V$
is an $\O$-submodule $L= \span_\O(v_1,\ldots , v_n)$ where
$\{v_1,\cdots ,v_n\}$ is a $\K$-basis of $V$. 
Consider the family of \textbf{standard $\O$-lattices}
\[E_j := \span_\O\tup{e_j,e_{j-1},\cdots, e_{j-n+1}} = \mathrm{fSpan}_\C\tup{e_k}_{k\le j},\]
where $\mathrm{fSpan}$ denotes the formal span of possibly infinitely many basis vectors. 
Note that $E_j =\delta^j E_1$, where 
$\delta$ is the shift operator defined as
 $\delta(e_j)= e_{j+1}$, or as a matrix
\[\delta = \begin{bmatrix} 0 & 0 &\cdots &0 & t \\
                           1 & 0 &\cdots &0 & 0 \\
                           0 & 1 &\cdots &0 & 0 \\ 
                           \vdots & \vdots & & \vdots &\vdots \\
                           0 & 0 & \cdots & 1 & 0\end{bmatrix}.\]
Let $G_k:=\{g\in G(\K) :\text{ord}\det(g)=-k\}$, so 
$G_kG_\ell=G_{k+\ell}$. 
Note that
$\delta\in G_1$ and $G_k=\delta^k G_0=G_0\delta^k$.
The decomposition
$G(\K)=\coprod_{k\in\Z} G_j$ can be thought of as decomposing
$G(\K)$ into connected components. 

The \textbf{complete affine flag variety} $Fl(V)$ is the space of all chains
of lattices $L_\bullet = (\cdots \subset L_1\subset L_2\subset \cdots \subset L_n\subset \cdots )$
such that $L_{i+n} = tL_i$ (equivalently, $t^{-1}L_{i+n} = L_i$)
and $\dim(L_i/L_{i-1}) = 1$
for all $i$. 
The \textbf{standard flag} is $E_\bullet := (\cdots \subset E_1\subset\cdots\subset E_n\subset \cdots )$,
whose stabilizer $\I$ is the subgroup of $G(\mathcal{O})$ which are
upper-triangular modulo $t^{-1}$:
\[\I = \{b=(b_{ij})\in G(\O): \ord(b_{ij})>0\  \forall i>j\}.\]
 Therefore $Fl(V) \cong G(\K)/\I$, and $\I$ is the \textbf{standard Iwahori
 subgroup} of $G(\K)$.
$Fl(V)$ decomposes as $\coprod_{j}Fl_j(V)$
where $FL_j(V):=G_j\cdot E_\bullet\cong G_j/\I$. Define also 
\[\I_-:=\{b=(b_{ij})\in GL_n(\C[t]): \ord(b_{ij})<0\  \forall i<j\},\]
(cf. \cite[Section 7.1]{galashin2022regularity}). For all $j\in\Z$, $Fl_j$ are isomorphic as ind-varieties via shifting by $\delta$, and the isomorphisms are equivariant with respect to the actions of $\I$ and $\I_-$. 

We define a map $\phi: G(\K)\to Mat_{\infty\times\infty}(\C)$ as follows. Given
$M = \sum_{i\le N} M_i t^{i}$ where $M_i\in Mat_{n\times n}(\C)$, we let
$\phi(M)_{ni+c, nj+d} = (M_{i-j})_{c,d}$. Namely, $\phi(M)$
is the following matrix:
\[
\begin{bmatrix}
\ddots &\vdots & \vdots & \vdots & \vdots&\reflectbox{$\ddots$} \\ 
\cdots & M_0 & M_{-1} & M_{-2} & M_{-3} &\cdots \\
\cdots & M_1 & M_{0} & M_{-1} & M_{-2} &\cdots \\
\cdots & \vdots & M_{1} & M_{0} & M_{-1} &\cdots \\
0 & M_N & \vdots & M_{1} & M_{0} &\cdots \\
0 & 0   & M_N    &\vdots & M_1 &\cdots \\
0 & 0   & 0      & M_N & \vdots & \cdots \\
\reflectbox{$\ddots$} & \vdots &\vdots &\vdots & \vdots & \ddots \\
\end{bmatrix}
\]
This presentation of  elements of $G(\mathcal{K})$ can be found in
\cite{Lam_2012} where it is used to define and study total
nonnegativity in loops groups.

Given a matrix in the image of $\phi$, one can construct a flag
$L_\bullet$ such that $L_i$ is the column span of the columns with indices
$\le i$. The standard flag is given by any element in $\I$, and in particular the $\infty\times \infty$
identity matrix. The Iwahori subgroup $\I$ consists of all elements in $G(\K)$ whose images are infinite upper-triangular matrices under $\phi$.

The \textbf{extended affine Weyl group} $\widetilde{W}$ for
$G(\mathcal{K})$ can be realized as the subset
$\{\pi:\Z\to\Z \text{ bijection}: \pi\delta^n=\delta^n\pi \}$
of the set of bijections on $\Z$, where $\delta(i)=i+1$. Hence for any
$w\in\widetilde{W}$ and $i\in \Z$, $w(i+n)=w(i)+n$. To specify an element in $w\in \widetilde{W}$, we use the notation $w=[w(1),\cdots, w(n)]$, as the rest of the values of $w$ are determined by periodicity.
Given any $w\in \widetilde{W}$, we let $w$ act $\mathcal{K}$-linearly on $V$
by $w (e_i) = e_{w(i)}$. This gives an 
embedding $\widetilde{W}\subset G(\mathcal{K})$.
The corresponding
\textbf{affine permutation matrix} is the $\infty\times\infty$
matrix $(a_{ij})$ where $a_{ij}=1$ if $i=w(j)$ and $a_{ij}=0$ otherwise. Note that this means we read off the permutation from a permutation matrix by \emph{columns} instead of rows.
We denote the coordinate flag determined by the affine permutation matrix of $w\in \widetilde{W}$ by $E^w_\bullet$.
 We also define
\[ W_j:=\widetilde{W}\cap G_j =\left\{w\in\widetilde{W}:\sum_{i=1}^n w(i)-i=nj\right\}. \]
For $0\le i\le n-1$, let $s_i\in W_0$ be the simple reflection given by
$s_i(i)=i+1$, $s_i(i+1)=i$, and $s_i(j)=j$
for all $j\not\equiv i \text{ or }i+1 \pmod n$. 
$W_0$  is the subgroup of $\widetilde{W}$
generated by $s_0,\cdots, s_{n-1}$. 
We have a semidirect product
$\widetilde{W}\cong \langle \delta\rangle \ltimes W_0$, where $\delta $ acts on
$W_0$ via the outer automorphism
$\delta s_i\delta^{-1} = s_{i+1}$. For $w\in\widetilde{W}$, we say $w$ has \textbf{index $j$} if
$w\in W_j$. Equivalently, $w=w'\delta^j$ for some $w'\in W_0$.

 For the reader's convenience, we review some standard results about  Bruhat order on $W_0$, following \cite[Chapter 8.3]{BB05}. Since $W_0$ is a Coxeter group, it has a length function $l:W_0\to \mathbb{N}$ that can be computed by counting \textbf{affine inversions} in the window notation of $w$. Concretely, we have
    \[
    l(w)=\mathrm{inv}(w(1),\ldots , w(n))+\sum_{1\leq k < \ell \leq n} \left\lfloor \frac{|w(k)-w(\ell)|}{n} \right\rfloor,
    \]
    where $\mathrm{inv}(w(1),\ldots , w(n))=|\{ (k,\ell) \;|\; 1\leq k < \ell \leq n, w(k)>w(\ell) \}|$ is the number of ordinary inversions.

    The \textbf{right descent set} $\operatorname{Des}(w)$ of an element $w$ of $W_0$ will be of  interest to us. It is defined as
    \[
    \operatorname{Des}(w):=\left\{ s_i : i\in \{0,1,\ldots ,n-1\}, ws_i<w \right\}.
    \]
    In the window notation $w=[w(1),\cdots ,w(n)]$, the simple reflection $s_i$ is a right descent if $w(i)>w(i+1)$ for $i=1,\ldots, n-1$, in particular $s_0$ is a right descent if $w(0)=w(n)-n>w(1)$. 

    In the extended affine Weyl group $\widetilde{W}$, each $W_j$ inherits an order from $W_0$. In other words, for any $w_1,w_2\in\widetilde W$, $w_1\le w_2$ if and only if $w_1=w_1'\delta^j$, $w_2=w_2'\delta^j$ for some $w_1',w_2'\in W_0$ and $j\in \Z$, and $w_1'\le w_2'$ in Bruhat order.

  $G(\K)$ admits a Bruhat decomposition, $G(\K)=\bigsqcup_{w\in \widetilde{W}} \I w\I$.  Hence we also have a
Bruhat decomposition
of the complete affine flag variety,
$Fl(V) = \bigsqcup_{w\in \widetilde{W}} \mathcal{X}^w_\circ$ where $\mathcal{X}^w_\circ := \I w\I/\I=\I wE_\bullet$.
As remarked before, since $Fl(V)$ decomposes into $\Z$-many isomorphic components, we will now restrict attention to  $Fl_0$, and the analysis on the other copies are essentially identical. Define for any $w\in W_0$ \[ \mathcal{X}^w_\circ := \I w\I/\I=\I wE_\bullet\text{ and }\mathcal{X}_w^\circ := \I_-w\I/\I=\I_- wE_\bullet \footnote{We use superscripts instead of the usual subscripts for Schubert cells/varieties. One motivation for this notation is that, in the usual type $A$ setting, taking the $B$-orbits of coordinate flags corresponds to \emph{upward} row operations. Another reason for this is that with this convention, $\mathcal{X}^w_v$ is nonempty if $v\leq w$.},\]
we have 
\[\mathcal{X}^w:=\overline{\mathcal{X}^w_\circ}=\bigsqcup_{v\le w}\mathcal{X}^v_\circ\text{ and }\mathcal{X}_w:=\overline{\mathcal{X}_w^\circ}=\bigsqcup_{u\ge w}\mathcal{X}_u^\circ.\]
We call  $\mathcal{X}^w_\circ$  a \textbf{Schubert cell} and  $\mathcal{X}_w^\circ$  an \textbf{opposite Schubert cell}, and similarly  $\mathcal{X}^w$  a \textbf{Schubert variety} and $\mathcal{X}_w$  an \textbf{opposite Schubert variety}.
We emphasize that each $\mathcal{X}^w$ is a finite-dimensional variety, whereas each $\mathcal{X}_w$ is an infinite-dimensional ind-variety. In particular, $\mathcal{X}^\circ_\mathrm{id}$ is open inside $Fl_0(V)$. 

Explicitly, $\mathcal{X}^w_\circ$ and $\mathcal{X}_w^\circ$ and their closures can be  described in terms of incidence conditions via lattices.
Suppose $w\in W_0$, we have \[\mathcal{X}^w_\circ = \{L_\bullet\in Fl_0(V):\forall i,j,\ \dim (E_i/(L_j\cap E_i)) = |\Z_{\le i} - w\Z_{\le j}|\}. \] 
Here $w\Z_{\le i}=\{w(k):k\le i\}$. Replacing the equality with
$\le$ we get conditions for defining 
$\mathcal{X}^w = \overline{\mathcal{X}^w_\circ}$. 

Let $E^j=\mathrm{fSpan}_\C\{e_i\}_{i> j}$ denote the $j$th 
\textbf{standard anti-lattice}. Notice that $E^\bullet=(\cdots \supset E^j \supset E^{j+1}\supset \cdots)$ is not an element
in the affine flag variety.
For each $v\in W_0$, we have
\[\mathcal{X}_v =\{L_\bullet\in Fl_0(V):\forall\ i,j,\ \dim(E^i \cap L_j)\ge |v\Z_{\le j}\cap \Z_{> i}|\}. \]


 Define the \textbf{Kazhdan-Lusztig} variety
 $\mathcal{X}_{\circ,v}^w:=\mathcal{X}^w_\circ \cap \mathcal{X}_v$.  Then $\mathcal{X}_{\circ,v}^w$ is nonempty if and only if $v\le w$. This follows from \cite[Lemma 7.1.22(b)]{kumar2012kac}.

\begin{rmk}
    In \cite{galashin2022regularity}, the affine flag variety was realized using the polynomial loop group, $GL_n(\mathbb{C}[t,t^{-1}])$. We note that the affine flag variety defined this way is isomorphic to the one defined by our definition; see \cite[Remark A.3]{galashin2022regularity}. The analysis done in this paper would not change significantly if we also used the polynomial loop group setting. 
\end{rmk}

Finally, for convenience, we define for any $a\in \mathbb{Z}$ the notation $[a]_n$ for the set of integers congruent to $a \pmod n$.

\section{Equations for opposite Schubert conditions}
In this section, let $M\in G_0\subset G(\mathcal{K})$. 
Then $M$ defines an affine flag
 $L_\bullet=[M]\in G(\K)/\I$, where
$L_j$ is the span of the columns of $\phi(M)$
with indices no greater than
$j$. For $I,J\subset \Z$, we let $M_{I,J}$ denote
the submatrix of $\phi(M)$ obtained by taking
the entries whose row indices are in $I$
and column indices are in $J$.
Furthermore,
for $j\in \Z$, we let $M_{*,j}$ denote the $j$th
column of $\phi(M)$.

\begin{defn}
    Let $v\in W_0$. 
    The \textbf{ diagram of $v$} is the collection of boxes
    \[
        D(v):=\{ (i,j) \; | \; v^{-1}(i)>j, i<v(j) \}.
    \]

    Alternatively, for each nonzero entry $(v(j),j)$, cross out all boxes below and to the right, the remaining boxes constitute $D(v)$.
\end{defn}
\begin{defn}
Let $v\in W_0$. The \textbf{essential set} of $v$
is 
\[\ess(v):=\{(i,j)\in\mathbb{Z}\times\mathbb{Z}:v(j)>i, v^{-1}(i)>j, v(j+1)\le i, v^{-1}(i+1)\le j\}.\]
\end{defn}
Diagrammatically, $\ess(v)$ consists of positions of cells that are southeast corners of $D(v)$.
\begin{defn}
For any $(i,j)\in\Z\times\Z$ and $v\in W_0$,
let \[l_v(i,j):=\max(0,j-\min_{k>i} (v^{-1}(k))+1)\] 
and for any $l\ge l_v(i,j)$, \[n_v(i,j,l):=|\{j':j-l<j'\le j, v(j')\le i\}|.\]
\end{defn}
In other words, $l_v(i,j)$ is the smallest $l$ such that all columns $\le j-l$ contain a 1 weakly north of row $i$ (i.e., $v(j')\le i$ for all $j'\le j-l$), and $n_v(i,j,l)$ counts the number of 1's in rows $\le i$ and columns $(j-l,j]$ for $l\ge l_v(i,j)$.

\begin{example}
We show an exmaple in Figure~\ref{fig:window}. Let $v=[1,5,0]$ and $(i,j)=(0,2)$. Here $l_v(i,j)=4$ and $n_v(i,j,4)=1$. For all $a\ge 0$, $n_v(i,j,4+a)=1+a$. The essential boxes in this diagram are denoted by $e$.
\end{example}
\begin{figure}
\begin{tikzpicture}[scale=0.8,transform shape]
\pgfmathsetmacro{\wid}{0.5}
\draw[LightSkyBlue1, step=\wid] (0,0) grid (7,7);
    \node[font=\tiny] at (0.5*\wid, 7.2){$\cdots$};
    \foreach  \j  in {2,...,13} {
       \pgfmathtruncatemacro{\x}{\j-8}%
       \node[font=\tiny] at (\j*\wid-0.5*\wid,7.2) {$\x$};
    };
    \node[font=\tiny] at (0.5*\wid+6.5, 7.2){$\cdots$};

    \node[font=\tiny] at (-0.3, 0.5*\wid){$\vdots$};
    \foreach  \j  in {2,...,13} {
       \pgfmathtruncatemacro{\x}{8-\j}%
       \node[font=\tiny] at (-0.3,\j*\wid-0.5*\wid) {$\x$};
    };
    \node[font=\tiny] at (-0.3, 0.5*\wid+6.7){$\vdots$};
    \foreach \j in {0,...,3} {
        \pgfmathsetmacro{\x}{((\j*3)+4)*\wid+0.5*\wid}%
        \pgfmathsetmacro{\y}{(-(\j*3)+13)*\wid+0.5*\wid}%
        \node[] at (\x,\y) {$1$};
        \draw[gray] (\x+0.2*\wid,\y) -- (7,\y);
        \draw[gray] (\x, \y-0.3*\wid) -- (\x, 0);
    }
    \foreach \j in {0,...,3} {
        \pgfmathsetmacro{\x}{((\j*3)+2)*\wid+0.5*\wid}%
        \pgfmathsetmacro{\y}{(-(\j*3)+12)*\wid+0.5*\wid}%
        \node[] at (\x,\y) {$1$};
        \draw[gray] (\x+0.2*\wid,\y) -- (7,\y);
        \draw[gray] (\x, \y-0.3*\wid) -- (\x, 0);
    }
    \foreach \j in {0,...,3} {
        \pgfmathsetmacro{\x}{((\j*3))*\wid+0.5*\wid}%
        \pgfmathsetmacro{\y}{(-(\j*3)+12)*\wid+0.5*\wid}%
        \node[] at (\x,\y) {$e$};
    }
    \foreach \j in {0,...,3} {
        \pgfmathsetmacro{\x}{((\j*3)+3)*\wid+0.5*\wid}%
        \pgfmathsetmacro{\y}{(-(\j*3)+13)*\wid+0.5*\wid}%
        \node[] at (\x,\y) {$e$};
    }
    \foreach \j in {0,...,3} {
        \pgfmathsetmacro{\x}{((\j*3))*\wid+0.5*\wid}%
        \pgfmathsetmacro{\y}{(-(\j*3)+11)*\wid+0.5*\wid}%
        \node[] at (\x,\y) {$1$};
        \draw[gray] (\x+0.2*\wid,\y) -- (7,\y);
        \draw[gray] (\x, \y-0.3*\wid) -- (\x, 0);
    }
    \draw[gray] (1.5*\wid, 0) -- (1.5*\wid, 7);
    \draw[blue, thick] (0, 7*\wid) -- (14*\wid, 7*\wid);
    \draw[blue, thick] (10*\wid, 0) -- (10*\wid, 14*\wid);
    \draw[red, thick, dashed] (6*\wid, 0) -- (6*\wid, 14*\wid);
\end{tikzpicture}
\caption{Example for $l_v(i,j)$ and $n_v(i,j,l)$}
\label{fig:window}
\end{figure}

The following lemma shows a correspondence
between the dimension condition on 
each $E^i\cap L_j$ coming from $v$ and conditions
on certain minors of $\phi(M)$.
\begin{lemma}
\label{lem1}
For all $(i,j)\in \mathbb{Z}\times \mathbb{Z}$, 
$\dim(E^i\cap L_j)\ge |v\mathbb{Z}_{\le j}\cap \mathbb{Z}_{>i}|$
if and only if there exists
$l\ge l_v(i,j)$ such that all minors
$\det (M_{I,J})$ where $I\subset \mathbb{Z}_{\le i}$, $J\subseteq [j-l+1, j]$,
and
$|I|=|J| =n_v(i,j,l)+1$ vanish.
\end{lemma}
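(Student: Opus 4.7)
The plan is to translate both directions of the equivalence into a kernel--rank relation for the finite submatrices $M_{\le i,\,[j-l+1,j]}$, and then to bridge between these finite approximations and the \emph{a priori} infinite-dimensional intersection $L_j\cap E^i$.

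First I would verify the combinatorial identity $n(i,j,l)=l-N$, where $N:=|w\Z_{\le j}\cap\Z_{>i}|$, for every $l\ge l(i,j)$. The condition $l\ge l(i,j)$ is precisely what is needed to guarantee that the strip of columns $[j-l+1,j]$ contains every $j'\le j$ with $w(j')>i$; among the $l$ columns of the strip, exactly $N$ of them have the $1$ of $\phi(w)$ strictly below row $i$, and the remaining $l-N$ have it in rows $\le i$.

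Next I would introduce the finite-dimensional subspace $L_j^{(l)}:=\span_\C\{M_{*,j-l+1},\ldots,M_{*,j}\}\subseteq L_j$. Since the columns of $\phi(M)$ are $\C$-linearly independent, $\dim L_j^{(l)}=l$, and $L_j^{(l)}\cap E^i$ is exactly the kernel of the linear map $\C^l\to\C^{\Z_{\le i}}$ given by the submatrix $M_{\le i,\,[j-l+1,j]}$ acting on coefficient vectors. The kernel--rank identity then yields
\[\dim(L_j^{(l)}\cap E^i)=l-\operatorname{rank}(M_{\le i,\,[j-l+1,j]}),\]
so the minor condition of the lemma is equivalent to $\dim(L_j^{(l)}\cap E^i)\ge l-n(i,j,l)=N$. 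The ``if'' direction then follows immediately, since $L_j^{(l)}\cap E^i\subseteq L_j\cap E^i$ gives $\dim(L_j\cap E^i)\ge N$.

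The main obstacle is the ``only if'' direction, which requires showing that for $l$ sufficiently large, $L_j\cap E^i$ already lies inside $L_j^{(l)}$. A priori an element of $L_j$ is an $\O$-combination, i.e., a potentially infinite $\C$-combination, of columns $M_{*,s}$ with $s\le j$. My approach is to pass through the inverse matrix. Given $v\in L_j\cap E^i$, set $u=M^{-1}v$; then $u\in E_j$ (i.e., $u_s=0$ for $s>j$) and $v=Mu$. The hypothesis $v_r=0$ for $r\le i$, together with the band structure of $\phi(M^{-1})$ (whose $(s,r)$-entry vanishes unless $s-r$ exceeds a bound determined by $\ord(M^{-1})$), forces $u_s=0$ for every $s$ below a threshold $s_0=s_0(M,i)$. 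Combined with $u_s=0$ for $s>j$, the support of $u$ is confined to a finite window $(s_0,j]$, so $v=Mu$ is a finite $\C$-linear combination of columns and therefore lies in $L_j^{(l_0)}$ for an $l_0$ depending only on $M,i,j$. Taking $l\ge\max(l(i,j),l_0)$ then yields $L_j^{(l)}\cap E^i=L_j\cap E^i$, and translating the dimension hypothesis back through the kernel--rank identity produces the desired minor vanishing.
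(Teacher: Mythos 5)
Your proof is correct and follows the paper's own approach: reduce to the rank of the finite strip $M_{(-\infty,i],[j-l+1,j]}$ via rank--nullity on $L_j^{(l)}\cap E^i$ and the combinatorial identity $n(i,j,l)=l-|w\Z_{\le j}\cap\Z_{>i}|$, which is exactly where the constraint $l\ge l(i,j)$ enters. The one place you go further than the paper is the stabilization $L_j\cap E^i=L_j^{(l)}\cap E^i$ for $l\gg 0$: the paper passes over this with a bare \emph{in other words}, whereas your argument through the band structure of $\phi(M^{-1})$ (bounding the support of $u=M^{-1}v$ from below when $v$ vanishes in rows $\le i$) actually justifies the equivalence and, as a byproduct, shows that $L_j\cap E^i$ is finite-dimensional.
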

\begin{proof}
Suppose $\dim(E^i\cap L_j)\ge |v\mathbb{Z}_{\le j}\cap \mathbb{Z}_{>i}|$.
In other words, there exists $l>0$
such that 
$\dim(\text{span}\{M_{*,j-l+1},\cdots, M_{*,j}\}\cap E^i )\ge |v\mathbb{Z}_{\le j}\cap \mathbb{Z}_{>i}|$.   Suppose the rank of the matrix 
$M_{(-\infty, i],[j-l+1,j]}$  is $r$. This means that
we can perform column operations on
$M_{(-\infty, \infty),[j-l+1,j]}$  such that there are exactly
$l-r$ columns whose entries with row
indices less than or equal to $i$ are 0. Therefore, this is
equivalent to $\dim(\text{span}\{M_{*,j-l+1},\cdots M_{*,j}\}\cap E^i )=l-r\ge |v\mathbb{Z}_{\le j}\cap \mathbb{Z}_{>i}|$.  In other words, 
$r\le l-|v\mathbb{Z}_{\le j}\cap \mathbb{Z}_{>i}|.$ We may take $l\ge l_v(i,j)$ so that $l-|v\mathbb{Z}_{\le j}\cap \mathbb{Z}_{>i}|=n_v(i,j,l)$. 
This is true if and only if all minors of size
$n_v(i,j,l)+1$ in the matrix $M_{(-\infty, i],[j-l+1,l]}$ vanish.
\end{proof}

\begin{lemma}
\label{lem2}
Suppose $v\in W_0$. For any $j\in\Z$,
there exists $N<\!< j$ such that for all $i<N$,
$|v\Z_{\le j}\cap \Z_{>i}|=j-i$.
\end{lemma}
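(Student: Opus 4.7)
The plan is to decompose $w\Z_{\le j}$ into $n$ arithmetic progressions of common difference $n$, one per residue class modulo $n$, and then evaluate the count by an elementary summation identity combined with the index hypothesis.

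First, for $r \in \{1, \ldots, n\}$, I would let $m_r$ denote the largest integer $\le j$ with $m_r \equiv r \pmod n$, so that $\{m_1, \ldots, m_n\} = \{j-n+1, \ldots, j\}$. Using $w(x+n) = w(x)+n$, the image under $w$ of $\{m \le j : m \equiv r \pmod n\}$ is exactly $\{w(m_r) - cn : c \ge 0\}$, yielding the disjoint decomposition
\[w\Z_{\le j} = \bigsqcup_{r=1}^n \{w(m_r) - cn : c \ge 0\}.\]
Setting $N := \min_r w(m_r)$, for any $i < N$ each $w(m_r) - i$ is a positive integer, and the number of $c \ge 0$ with $w(m_r) - cn > i$ is $\lceil (w(m_r)-i)/n \rceil$. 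Hence $|w\Z_{\le j} \cap \Z_{>i}| = \sum_r \lceil (w(m_r)-i)/n \rceil$.

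Next, since $w$ commutes with translation by $n$, it descends to a bijection on $\Z/n\Z$; combined with the fact that $\{m_r\}$ is a full residue system mod $n$, this shows $\{(w(m_r)-i) \bmod n : r\}$ is also a complete residue system. For any positive integers $x_1, \ldots, x_n$ with distinct residues modulo $n$, writing $x_r = nq_r + s_r$ with $0 \le s_r < n$ (noting that exactly one $s_r$ is $0$) gives the elementary identity $\sum_r \lceil x_r/n \rceil = \frac{1}{n}\sum_r x_r + \frac{n-1}{2}$. Applying this reduces the sum above to $(\sum_r w(m_r) - ni)/n + (n-1)/2$.

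Finally, I would use the index hypothesis to evaluate $\sum_r w(m_r)$. Because $w(x) - x$ depends only on $x \bmod n$ and $\{m_r\}$ is a full residue system, $\sum_r (w(m_r) - m_r) = \sum_{r=1}^n (w(r)-r) = nk$; combined with $\sum_r m_r = nj - n(n-1)/2$, this yields $\sum_r w(m_r) = nj + nk - n(n-1)/2$. Substituting gives $|w\Z_{\le j} \cap \Z_{>i}| = j + k - i$, as desired, for all $i < N$. The only subtle point is verifying the ceiling-summation identity and tracking that the $(n-1)/2$ terms cancel; otherwise the argument is a direct calculation and I do not anticipate a serious obstacle.
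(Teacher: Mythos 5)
Your proof is correct, and it takes a genuinely different route from the paper's. The paper first chooses $N$ small enough that $w^{-1}(i)<j$ for all $i<N$, then argues case-by-case that for such $i$, left-multiplication by any simple reflection $s$ preserves the count $|w\Z_{\le j}\cap\Z_{>i}|$; since $w=w'\sigma^k$ with $w'\in W_0$ a word in the $s_i$, this reduces the claim to the transparent computation $|\sigma^k\Z_{\le j}\cap\Z_{>i}|=|\Z_{\le j+k}\cap\Z_{>i}|=j+k-i$. Your argument avoids the Coxeter structure entirely: you split $\Z_{\le j}$ into $n$ residue classes, use $n$-periodicity to write $w\Z_{\le j}$ as $n$ arithmetic progressions $\{w(m_r)-cn:c\ge0\}$, count each above the cutoff $i$ with a ceiling, and finish with the ceiling-sum identity together with the defining relation $\sum_{r=1}^n(w(r)-r)=nk$ for the index. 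I checked the details: the identity $\sum_r\lceil x_r/n\rceil=\frac{1}{n}\sum_r x_r+\frac{n-1}{2}$ for distinct residues is correct (write $x_r=nq_r+s_r$, note $\sum s_r=n(n-1)/2$ and exactly one $s_r=0$), the positivity condition $i<N=\min_r w(m_r)$ is exactly what makes the ceiling count the progression correctly, and the substitutions for $\sum_r m_r$ and $\sum_r w(m_r)$ cancel the $(n-1)/2$ terms as claimed. Your approach is more elementary and has the minor advantage of producing an explicit admissible $N$, while the paper's leans on the semidirect product $\widetilde{W}\cong\langle\sigma\rangle\ltimes W_0$ and an invariance-under-simple-reflection argument more in the spirit of affine Weyl group combinatorics.
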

\begin{proof}
When $v$ is the identity the claim is clear. Suppose the claim is true for $v$; we proceed by induction to show it's also true for $sv>v$, where $s$ is a simple reflection. 
Let $j\in \Z$. Pick $N<\!<j$ such that the claim is true by induction hypothesis. 
Since $v$ is an affine permutation, we may further assume that $N$ is small enough so that for any $i< N$, $v^{-1}(i)<j$ and $sv^{-1}(i)<j$. 
Pick any $i<N$.
In the permutation matrix of $v$,
$|v\Z_{\le j}\cap \Z_{>i}|$ counts the number of $1$'s with row indices $>i$ an column indices $<j$. (For example in Figure~\ref{fig:window}, if $j=2$ and $i=-2$, $|v\Z_{\le j}\cap \Z_{>i}|=4=j-i$.) Consider multiplying $v$ on the left by
$s$, which swaps corresponding adjacent 
rows. Now notice the following for any
$j'\le j$:
\begin{itemize}
\item If $v(j')>i+1$, then $sv(j')>i$.
Therefore
$v(j')\in v\Z_{\le j}\cap \Z_{>i}$
and $sv(j')\in sv\Z_{\le j}\cap \Z_{>i}$.
\item If $v(j')<i$, then $sv(j')\le i$.
Therefore
$v(j')\not\in v\Z_{\le j}\cap \Z_{>i}$
and $sv(j')\not\in sv\Z_{\le j}\cap \Z_{>i}$.
\item
If $v(j')=i+1$ and $sv(j')=i+2$, we have
$v(j')\in v\Z_{\le j}\cap \Z_{>i}$
and $sv(j')\in sv\Z_{\le j}\cap \Z_{>i}$.
\item 
If $v(j')=i$ and $sv(j')=i-1$, we have
$v(j')\not\in v\Z_{\le j}\cap \Z_{>i}$
and $sv(j')\not\in sv\Z_{\le j}\cap \Z_{>i}$.
\item
If $v(j')=i+1$ and $sv(j')=i$,
then $j'':=v^{-1}(i)<j$ and $sv(j'')=i+1$.
Namely, $v(j')\in v\Z_{\le j}\cap \Z_{>i}$,
$sv(j')\not\in sv\Z_{\le j}\cap \Z_{>i}$,
$v(j'')\not\in v\Z_{\le j}\cap \Z_{>i}$,
and $sv(j'')\in sv\Z_{\le j}\cap \Z_{>i}$.
\item If $v(j')=i$ and $sv(j')=i+1$,
then $j'':=v^{-1}(i+1)<j$ and $sv(j'')=i$. 
Namely, $v(j')\not\in v\Z_{\le j}\cap \Z_{>i}$,
$sv(j')\in sv\Z_{\le j}\cap \Z_{>i}$,
$v(j'')\in v\Z_{\le j}\cap \Z_{>i}$,
and $sv(j'')\not\in sv\Z_{\le j}\cap \Z_{>i}$.
\end{itemize}
The above argument shows that there is a bijection
between the sets $\{j':j'\le j, v(j')>i\}$ and
$\{j': j'\le j, sv(j')>i\}$ for any simple reflection $s$.
The claim then follows by induction.
\end{proof}

The following Lemma proves that it is enough to check the conditions coming from essential boxes.

\begin{lemma}
\label{lem3}
Let $M\in G_0$ and $L_\bullet=[M]\in Fl_0(V)$. 
Then for any $v\in W_0$, 
if for all  $(i,j)\in\ess(v)$, 
$\dim(E^i\cap L_j)\ge |v\mathbb{Z}_{\le j}\cap \mathbb{Z}_{>i}|$, we have
 for all  $(i,j)\in \Z\times \Z$,
$\dim(E^i\cap L_j)\ge |v\mathbb{Z}_{\le j}\cap \mathbb{Z}_{>i}|$.
\end{lemma}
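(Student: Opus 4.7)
I adapt Fulton's reduction from the finite case. The four conditions defining $\mathcal{E}ss(w)$ each correspond to a one-step reduction between adjacent boxes, so that the inequality at the target implies the inequality at the source. When $w(j)\le i$, $(i,j)$ reduces to $(i,j-1)$ because $L_{j-1}\subseteq L_j$ and $|w\Z_{\le j}\cap\Z_{>i}|=|w\Z_{\le j-1}\cap\Z_{>i}|$ (the $1$ in column $j$ lies at or below row $i$). When $w(j+1)>i$, $(i,j)$ reduces to $(i,j+1)$, since $L_{j+1}/L_j$ is one-dimensional and $|w\Z_{\le j+1}\cap\Z_{>i}|=|w\Z_{\le j}\cap\Z_{>i}|+1$. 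The moves for $w^{-1}(i)\le j$ (target $(i-1,j)$) and $w^{-1}(i+1)>j$ (target $(i+1,j)$) are the row-wise analogues using that $E^{i-1}/E^i$ is one-dimensional. By construction, the essential boxes are exactly those at which none of the four moves applies.

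Given any $(i,j)\in\Z\times\Z$, I reduce via these moves to either an essential box or a box where the inequality is automatic. If $(i,j)$ lies in the diagram $D(w):=\{(i',j'):w(j')>i',\ w^{-1}(i')>j'\}$, only the east and south moves can apply. Applying the east move repeatedly reaches $(i,j_1)$, where $j_1+1$ is the smallest column exceeding $j$ with $w(j_1+1)\le i$; this is a finite integer because $w^{-1}(i)$ is. Applying the south move then reaches $(i_1,j_1)$, where $i_1+1$ is the smallest element of $w(\Z_{\le j_1})\cap\Z_{>i}$, again finite since this set contains $w(j)>i$ and is bounded below; the resulting $(i_1,j_1)$ satisfies all four essential conditions. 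For $(i,j)\notin D(w)$, I first apply the west move, which terminates either because $|w\Z_{\le\,\cdot}\cap\Z_{>i}|$ hits $0$ (the inequality is then trivial) or at some $(i,j^*)$ with $w(j^*)>i$; if $(i,j^*)\in D(w)$ I reduce to the previous case, otherwise I apply the north move.

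The main obstacle is termination of the north-move cascade, which can continue indefinitely precisely when $w^{-1}(i')\le j^*$ for every $i'\le i$. In that indefinite case, $(i,j^*)$ lies in the asymptotic region analyzed in Lemma~\ref{lem2}, where $|w\Z_{\le j^*}\cap\Z_{>i}|=j^*+k-i$. I need to show that $\dim(E^i\cap L_{j^*})\ge j^*+k-i$ holds automatically for every $M\in G_k$, providing the base case of the north-move induction. Writing $\phi(M)=\phi(M')\phi(\sigma^k)$ with $M'\in G_0$ gives $L_{j^*}=\phi(M')(E_{j^*+k})$; since $M'\in G_0$ preserves the relative index of $\O$-lattices in $V$, the lattice $L_{j^*}$ is commensurable with $E_{j^*+k}$ with relative index zero. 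Comparing with $E_{j^*+k}\cap E^i=\span_\C\{e_m:i<m\le j^*+k\}$, which has dimension $j^*+k-i$, yields the required lower bound in the asymptotic range. With this base case, the north-move induction terminates after finitely many steps, completing the reduction.
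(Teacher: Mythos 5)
Your proof is correct and takes essentially the same approach as the paper's: the four one-step reductions you describe are exactly the implications used in the paper's Cases 1--4, and your base case for the north-move cascade (a relative-index comparison of $L_{j^*}$ against $E_{j^*+k}$) parallels the paper's invocation of Lemma~\ref{lem2} together with the inequality $\dim(E^{i'}\cap L_j)\ge\dim(E^{i'}\cap\sigma^k(E_j))$ coming from $L_*\in Fl_k(V)$. The only difference is organizational: you merge the paper's off-diagram Cases 2--4 into a uniform west-then-north pass rather than treating them separately.
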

\begin{proof}
\textbf{Case 1} (when $(i,j)$ lies in a region that is not crossed out). 
Suppose $v^{-1}(i)>j$ and $v(j)>i$. 
Then $(i,j)$ lies in the same connected component
as some $(i',j')\in\ess(v)$ for which $i'\ge i$
and $j'\ge j$. 
 Suppose 
$v(j+1)>i$, then $|v\mathbb{Z}_{\le j+1}\cap \mathbb{Z}_{>i}|=|v\mathbb{Z}_{\le j}\cap \mathbb{Z}_{>i}|+1$. 
Therefore we have the statement
\[\dim(E^i\cap L_{j+1})\ge |v\Z_{\le j+1}\cap \Z_{>i}|\implies \dim(E^i\cap L_j)\ge |v\mathbb{Z}_{\le j}\cap \mathbb{Z}_{>i}|.\] 
Similarly suppose
$v^{-1}(i+1)>j$, then $|v\mathbb{Z}_{\le j}\cap \mathbb{Z}_{>i+1}|=|v\mathbb{Z}_{\le j}\cap \mathbb{Z}_{>i}|$.
Therefore we have
the statement \[\dim (E^{i+1}\cap L_j)\ge |v\mathbb{Z}_{\le j}\cap \mathbb{Z}_{>i+1}|\implies \dim (E^i\cap L_j)\ge |v\mathbb{Z}_{\le j}\cap \mathbb{Z}_{>i}|.\] 
These two statements combined give that  $\dim(E^{i'}\cap L_{j'})\ge |v\mathbb{Z}_{\le j'}\cap \mathbb{Z}_{>i'}|$ implies $\dim(E^i\cap L_j)\ge |v\mathbb{Z}_{\le j}\cap \mathbb{Z}_{>i}|$. 

\textbf{Case 2} (when $(i,j)$ lies in a region that is crossed out
by a vertical ray but not a horizontal ray). Suppose 
$v(j)\le i$ and $v^{-1}(i)>j$. In this case
either $v(j-1)\le i$ or $v(j-1)>i$ . In other words, 
if an entry is crossed out only vertically, the entry to the left
is either also crossed out only vertically, or not crossed out.
Notice that $v(j)\le i$  implies $|v\Z_{\le j}\cap \Z_{>i}|=|v\Z_{\le j-1}\cap \Z_{>i}|$. Therefore we
have the statement
\[\dim(E^i\cap L_{j-1})\ge|v\Z_{\le j-1}\cap \Z_{>i}|\implies
\dim(E^i\cap L_{j})\ge|v\Z_{\le j}\cap \Z_{>i}|.\]
Thus if there exists $j'<j$ such that $v(j')>i$, by Case 1
we know $\dim(E^i\cap L_{j'})\ge |v\Z_{\le j'}\cap \Z_{>i}|$. By picking the 
largest such $j'$ and
(repeatedly) applying the
statement above, we get $\dim(E^i\cap L_{j})\ge |v\Z_{\le j}\cap \Z_{>i}|$. Otherwise if 
$v(j')\le i$ for all $j'<j$, all entries
to the left of $(i,j)$ are crossed out (only) vertically. 
However in this case we know that $|v\Z_{\le j'}\cap \Z_{>i}|=0$ for all $j' < j$. 
Using the statement above again  
we may deduce $\dim(E^i\cap L_{j})\ge|v\Z_{\le j}\cap \Z_{>i}|$
as desired.

\textbf{Case 3} (when $(i,j)$ lies in a region that is crossed out
by a horizontal ray but not a vertical ray). Suppose
$v(j)>i$ and $v^{-1}(i)\le j$. In this case 
$v^{-1}(i-1)\le j$ or $v^{-1}(i-1)>j$. In other words, 
if an entry is crossed out only horizontally, the entry immediately
above is either also crossed out only horizontally, or not 
crossed out. Notice that 
$v^{-1}(i)\le j$ implies $|v\Z_{\le j}\cap \Z_{>i-1}|=|v\Z_{\le j}\cap \Z_{>i}|+1$. 
Therefore we have the
statement 
\[\dim(E^{i-1}\cap L_{j})\ge|v\Z_{\le j}\cap \Z_{>i-1}|\implies
\dim(E^{i}\cap L_{j})\ge|v\Z_{\le j}\cap \Z_{>i}|.\] 
Thus if there exists $i'<i$ such
that $v^{-1}(i')>j$, by Case 1 we know that
$\dim(E^{i'}\cap L_{j})\ge|v\Z_{\le j}\cap \Z_{>i'}|$.
By picking the largest such $i'$ and 
 (repeatedly) applying the statement above, we get
 $\dim(E^{i}\cap L_{j})\ge|v\Z_{\le j}\cap \Z_{>i}|$. Otherwise if 
$v^{-1}(i') \le j$ for all $i'<i$, all entries
directly above $(i,j)$ are crossed out only horizontally.
We know by 
Lemma \ref{lem2}
that there exists $i'<\!<i$  sufficiently small 
such that $|v\Z_{\le j}\cap \Z_{>i'}|= j-i'=\dim (E^{i'}\cap E_j)$.
Since $(L_\bullet)\in Fl_0(V)$, we know
$\dim (E^{i'}\cap L_j)\ge \dim (E^{i'}\cap E_j)$, 
so $\dim (E^{i'}\cap L_j)\ge |v\Z_{\le j}\cap \Z_{>i'}|$.
We may then deduce that  $\dim(E^{i}\cap L_{j})\ge|v\Z_{\le j}\cap \Z_{>i}|$ by the
statement above.

\textbf{Case 4} (when $(i,j)$ is crossed out by both a vertical and a horizontal ray). Suppose $v(j)\le i$
and $v^{-1}(i)\le j$. Similar to the argument in Case 2, we have
\[\dim(E^i\cap L_{j-1})\ge |v\Z_{\le j-1}\cap \Z_{>i}|\implies \dim(E^i\cap L_j)\ge |v\Z_{\le j}\cap \Z_{>i}|.\]Let  $j'=v^{-1}(i)-1$. Then $(i,j')$
is either not crossed out as in Case 1 or is only crossed
out vertically as in Case 2. 
These together with the statements above give the desired result.
\end{proof}
\begin{thm}
\label{thm:seteq}
Let $M\in G_0$ and $v\in W_0$.
$[M]\in\mathcal{X}_v\subset Fl_0(V)$
if and only if for all
$(i,j)\in \ess(v)\subset{\Z\times \Z}$,
there exists $l\ge l_v(i,j)$
such that such that all minors of size
$n_v(i,j,l)+1$ in the submatrix $\phi(M)_{(-\infty, i],[j-l+1,j]}$
of $\phi(M)$ vanish.
\end{thm}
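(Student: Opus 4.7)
The plan is to observe that the three lemmas already in hand assemble into the theorem almost mechanically, with the direction from essential-set conditions to all-of-$\Z\times\Z$ conditions being exactly the content of Lemma \ref{lem3}, while the translation between a dimensional inequality at a single box $(i,j)$ and the vanishing of a batch of minors is exactly Lemma \ref{lem1}.

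For the forward direction, assume $[M]\in\mathcal{X}_w$. By the definition of $\mathcal{X}_w$ given in Section 2, this means $\dim(E^i\cap L_j)\ge |w\Z_{\le j}\cap\Z_{>i}|$ for \emph{every} pair $(i,j)\in\Z\times\Z$, so in particular for every $(i,j)\in\mathcal{E}ss(w)$. Applying Lemma \ref{lem1} at each essential box then produces, at each such $(i,j)$, some $l\ge l(i,j)$ for which all minors of size $n(i,j,l)+1$ in $M_{(-\infty,i],[j-l+1,j]}$ vanish, which is what the theorem demands.

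For the reverse direction, assume the minor-vanishing hypothesis holds at every $(i,j)\in\mathcal{E}ss(w)$. Lemma \ref{lem1}, applied in the other direction, converts this hypothesis at each essential box into the dimensional inequality $\dim(E^i\cap L_j)\ge |w\Z_{\le j}\cap\Z_{>i}|$ at that essential box. This is precisely the hypothesis of Lemma \ref{lem3}, which then upgrades the inequality from $\mathcal{E}ss(w)$ to all of $\Z\times\Z$. By the definition of $\mathcal{X}_w$, this shows $[M]\in\mathcal{X}_w$.

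There is no real obstacle left; the work has all been done in the lemmas. The only thing I would be careful about in writing it up is (i) noting the apparent typo in the statement, where $[j-l+1,l]$ should be $[j-l+1,j]$ to match Lemma \ref{lem1}, and (ii) checking that the hypothesis $M\in G_k$ with $w$ of the same index $k$ is used precisely where Lemma \ref{lem3} invokes it (namely in Case 3, where the comparison $\dim(E^{i'}\cap L_j)\ge\dim(E^{i'}\cap\sigma^k(E_j))$ requires $L_*\in Fl_k(V)$ and the count $|w\Z_{\le j}\cap\Z_{>i'}|=j+k-i'$ from Lemma \ref{lem2} requires $w$ to have index $k$). Both match up, so the proof is a two-line application of Lemmas \ref{lem1} and \ref{lem3}.
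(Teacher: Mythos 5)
Your proof is correct and is the same two-lemma argument the paper uses: Lemma \ref{lem1} to translate dimension inequalities into minor vanishing, and Lemma \ref{lem3} to reduce the set of required dimension inequalities to those at essential boxes. You spell out the two directions more explicitly than the paper's terse proof and correctly flag the typo $[j-l+1,l]$ for $[j-l+1,j]$, but the substance is identical.
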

\begin{proof}
By Lemma \ref{lem3} we know
the dimension conditions
coming from the $\ess(v)$
imply  dimension conditions
for all $(i,j)$, and Lemma
\ref{lem1} translates the
dimension  conditions
to conditions on the matrix
$\phi(M)$. Thus our theorem
is a direct consequence of these
two lemmas.
\end{proof}

\section{Equations for Kazhdan-Lusztig vareties in $Fl_0(V)$}
\subsection{A preferred Bott-Samelson parametrization for $\mathcal{X}_\circ^w$}
\label{subsec:param}

For $\alpha=1,\cdots,n-1$, let $m_\alpha(x)=(m_{ij})\in G(\mathcal{K})$ denote the $n$ by $n$ matrix such that
$m_{\alpha,\alpha}=x$, $m_{\alpha,\alpha+1}=1$, $m_{\alpha+1,\alpha}=1$, $m_{\alpha+1,\alpha+1}=0$, and
$m_{i,j}$ identical to the identity matrix in all other entries. For $\alpha=0$, let $m_\alpha(x)=(m_{ij})\in G(\mathcal{K})$ denote the matrix such that $m_{1,1}=0$, $m_{1,n}=t$, $m_{n,1}=t^{-1}$, and $m_{n,n}=x$, and $m_{i,j}$ identical to the identity matrix in all other entries. In other words, for $\alpha=0,\cdots,n-1$, $m_\alpha(x)=\exp(xR_\alpha){s_\alpha}$, where $R_\alpha$ is a root vector that spans the root space indexed by $\alpha$. For example, when $n=3$, we have
\[
m_0(x)=\begin{bmatrix}
   0 & 0 & t \\
   0 & 1 & 0 \\
   t^{-1} & 0 & x
\end{bmatrix},
m_1(x)=\begin{bmatrix}
   x & 1 & 0 \\
   1 & 0 & 0 \\
   0 & 0 & 1
\end{bmatrix},
m_2(x)=\begin{bmatrix}
   1 & 0 & 0 \\
   0 & x & 1 \\
   0 & 1 & 0
\end{bmatrix}.
\]
For any $w\in W_0$, let $Q=(s_{\alpha_1},\cdots ,s_{\alpha_{\ell(w)}})$ be a reduced word for $w$. 
Define the \textbf{Bott-Samelson map}
$m_Q:\mathbb{C}^{\ell(w)}\to G(\mathcal{K})$ such that
\[m_Q(x_1,\cdots,x_{\ell(w)})=\prod_{i=1}^{\ell(w)}m_{\alpha_i}(x_i).\]
This map descends to an isomorphism between $\mathbb{C}^{\ell(w)}$ and 
$\mathcal{X}^w_\circ$ and hence gives a parametrization of $\mathcal{X}^w_\circ$, see e.g., \cite[Chapter VII]{kumar2012kac}.

Typically for an arbitrary choice of a reduced word
$Q$, the 
entries in $\phi(m_Q(\mathbf{x}))$ are not linear expressions of the variables.
We give an inductive recipe for a choice of reduced word for $w$ such that each entry in $\phi(m_Q(\mathbf{x}))$ is 0, 1, or $x_i$ for some $i$.


Let $i$ denote the smallest row index in $D(w)$ such that there exists $1\le j\le n$ for which $(i,j)$ is uncrossed. Such $(i,j)$ with $j$ maximal will be called the \textbf{preferred box of $w$}. Let $\sigma(w):=j \bmod n$. Note that $i$ must exist since $w$ is an affine permutation: for all $i'$ sufficiently small, $w^{-1}(i')<0$, so $(i',j')$ with $1\le j' \le n$ are all crossed out. In terms of our window notation, the preferred box is $(i,j)$ such that $w(j+1)$ is minimal among $j\in \operatorname{Des}(w)$.
 \begin{lemma}
 \label{lem:preferred}
 If $(i,j)$ is the preferred box of $w$,
 $(i,j+1)$ must be a 1 in the permutation matrix of $w$.
 \end{lemma}

 \begin{proof}
 We prove this by contradiction. 
\begin{enumerate} 
\item If $(i,j+1)$ is an uncrossed box and $j<n$ this contradicts the maximality of $j$.
\item If $(i,j+1)$ is an uncrossed box and $j=n$, then since $w$ is an affine permutation, $(i-n,j-n+1)$ is also an uncrossed box and this contradicts the minimality of $i$. 
\item If $(i,j+1)$ is crossed out because of a $1$ in position $(w(j+1),j+1)$ with $w(j+1)<i$, then we claim that $(w(j+1),j)\in D(w)$. Since $(i,j)$ is not crossed out, $w(j)>i$ and in turn $w(j)>w(j+1)$. So $(w(j+1),j)\in D(w)$, and as $w(j+1)<i$, this contradicts the minimality of $i$.
\item If $(i,j+1)$ is crossed out because of a $1$ in position $(i,h)$ with $h<j$, then $(i,j)$ must also be crossed out, a contradiction.
\end{enumerate}
\end{proof}
Note that $ws_{\sigma(w)}<w$, since $i=w(j+1)<w(j)$. We are now ready to introduce the preferred reduced word for $w$ that gives a Bott-Samelson parametrization for $\mathcal{X}^w_\circ$.
Define inductively \[Q(w):=
\begin{cases}
Q(ws_{\sigma(w)})s_{\sigma(w)} &\text{ if } w\neq \id \\
\text{the empty word} &\text{ if } w=\id.
\end{cases}\]
We call $Q(w)$ the \textbf{preferred word} for $w$.

\begin{prop}
\label{prop:linearBS}
The Bott-Samelson map $m_{Q(w)}$ gives a linear parametrization of $\mathcal{X}^w_\circ$. 
In particular, each entry in $M:=\phi(m_{Q(w)}(\mathbf{x}))$ is 0, 1, or $x_i$ for some $i$.

\end{prop}
\begin{proof}
We first note that for any $M\in G(\mathcal{K})$, the effect of multiplying $M$ on the right by $m_{\alpha}(x)$ seen on $\phi(M)$ is the composition of the following two operations:
\begin{enumerate}[(1)]
    \item add $x$ times column $b$ to column $b+1$, for each $b\equiv \alpha \pmod n$;
    \item swap columns $b$ and $b+1$, for each $b\equiv \alpha \pmod n$.
\end{enumerate}

Furthermore, it is easy to see by induction that for any reduced word $Q$ of $w$, $\phi(m_Q(\mathbf{x}))$ has 1's in the same positions as in the permutation matrix of $w$, 0's in the crossed-out entries as the Rothe diagram of $w$, and polynomials of $x_i$'s in positive degrees in entries of $D(w)$.

We assume by induction that the proposition is true for $Q(ws_{\sigma(w)})$.  Let $M:=m_{Q(ws_{\sigma(w)})}(x_1,\cdots,x_{\ell(w)-1})$ and $M':=m_{Q(w)}(x_1,\cdots,x_{\ell(w)})$. Let $(i,j)$ be the preferred box of $w$. Recall that $j\in [\sigma(w)]_n$.
Then $\phi(M')$ can be obtained from $\phi(M)$ by adding $x_{\ell(w)}$ times column $b$ to column $b+1$ for each $b\in [\sigma(w)]_n$, and then swapping columns $b$ and $b+1$ for each $b\in [\sigma(w)]_n$. By choice of $(i,j)$ and Lemma~\ref{lem:preferred}, for every $b\in[\sigma(w)]_n$, column $b+1$ in $\phi(M')$ contains a single 1. (In particular, there are no other nonzero entries above this 1 because $i$ is minimal by the definition of preferred box.) Therefore, for every $b\in [\sigma(w)]_n$, column $b$ in $\phi(M)$ contains a single 1, and therefore all entries strictly to the right or below this one in $\phi(M)$ are all 0. The claim then easily follows.
\end{proof}
\begin{example}
\label{ex:chart}
Let $w=[-5,5,0,10]$. Then one can compute \[Q(w)=s_1s_0s_3s_1s_0s_2s_3s_1s_2s_3s_0.\]
The Bott-Samelson matrix for this preferred word 
is illustrated by the top matrix in Figure~\ref{fig:bigex}. (The two ``windows'' in the picture will be explained later.)
\begin{figure}
    \centering
\begin{tikzpicture}[scale=0.8,transform shape]
\pgfmathsetmacro{\wid}{0.5}

\draw[LightSkyBlue1, step=\wid] (0,0) grid (12,12);
    \node[font=\tiny] at (0.5*\wid, 12.2){$\cdots$};
    \foreach  \j  in {2,...,23} {
       \pgfmathtruncatemacro{\x}{\j-10}%
       \node[font=\tiny] at (\j*\wid-0.5*\wid,12.2) {$\x$};
    };
    \node[font=\tiny] at (0.5*\wid+11.5, 12.2){$\cdots$};

    \node[font=\tiny] at (-0.3, 0.5*\wid){$\vdots$};
    \foreach  \j  in {2,...,23} {
       \pgfmathtruncatemacro{\x}{15-\j}%
       \node[font=\tiny] at (-0.3,\j*\wid-0.5*\wid) {$\x$};
    };
    \node[font=\tiny] at (-0.3, 0.5*\wid+11.5){$\vdots$};
    \foreach \j in {0,...,5} {
        \pgfmathsetmacro{\x}{((\j*4)+3)*\wid+0.5*\wid}%
        \pgfmathsetmacro{\y}{(-(\j*4)+23)*\wid+0.5*\wid}%
        \node[] at (\x,\y) {$x_9$};
    }
    \foreach \j in {0,...,4} {
        \pgfmathsetmacro{\x}{((\j*4)+4)*\wid+0.5*\wid}%
        \pgfmathsetmacro{\y}{(-(\j*4)+23)*\wid+0.5*\wid}%
        \node[] at (\x,\y) {$x_{10}$};
    }
    \foreach \j in {0,...,4} {
        \pgfmathsetmacro{\x}{((\j*4)+5)*\wid+0.5*\wid}%
        \pgfmathsetmacro{\y}{(-(\j*4)+23)*\wid+0.5*\wid}%
        \node[] at (\x,\y) {$x_{11}$};
    }
    \foreach \j in {0,...,4} {
        \pgfmathsetmacro{\x}{((\j*4)+6)*\wid+0.5*\wid}%
        \pgfmathsetmacro{\y}{(-(\j*4)+23)*\wid+0.5*\wid}%
        \node[] at (\x,\y) {$1$};
    }
    \foreach \j in {0,...,5} {
        \pgfmathsetmacro{\x}{((\j*4)+3)*\wid+0.5*\wid}%
        \pgfmathsetmacro{\y}{(-(\j*4)+22)*\wid+0.5*\wid}%
        \node[] at (\x,\y) {$x_7$};
    }
    \foreach \j in {0,...,4} {
        \pgfmathsetmacro{\x}{((\j*4)+4)*\wid+0.5*\wid}%
        \pgfmathsetmacro{\y}{(-(\j*4)+22)*\wid+0.5*\wid}%
        \node[] at (\x,\y) {$1$};
    }
    \foreach \j in {0,...,4} {
        \pgfmathsetmacro{\x}{((\j*4)+3)*\wid+0.5*\wid}%
        \pgfmathsetmacro{\y}{(-(\j*4)+19)*\wid+0.5*\wid}%
        \node[] at (\x,\y) {$x_5$};
    }
    \foreach \j in {-1,...,4} {
        \pgfmathsetmacro{\x}{((\j*4)+5)*\wid+0.5*\wid}%
        \pgfmathsetmacro{\y}{(-(\j*4)+19)*\wid+0.5*\wid}%
        \node[] at (\x,\y) {$x_8$};
    }
    \foreach \j in {0,...,4} {
        \pgfmathsetmacro{\x}{((\j*4)+3)*\wid+0.5*\wid}%
        \pgfmathsetmacro{\y}{(-(\j*4)+18)*\wid+0.5*\wid}%
        \node[] at (\x,\y) {$x_4$};
    }
    \foreach \j in {-1,...,4} {
        \pgfmathsetmacro{\x}{((\j*4)+5)*\wid+0.5*\wid}%
        \pgfmathsetmacro{\y}{(-(\j*4)+18)*\wid+0.5*\wid}%
        \node[] at (\x,\y) {$x_6$};
    }
    \foreach \j in {0,...,4} {
        \pgfmathsetmacro{\x}{((\j*4)+3)*\wid+0.5*\wid}%
        \pgfmathsetmacro{\y}{(-(\j*4)+17)*\wid+0.5*\wid}%
        \node[] at (\x,\y) {$1$};
    }
    \foreach \j in {-1,...,3} {
        \pgfmathsetmacro{\x}{((\j*4)+5)*\wid+0.5*\wid}%
        \pgfmathsetmacro{\y}{(-(\j*4)+15)*\wid+0.5*\wid}%
        \node[] at (\x,\y) {$x_3$};
    }
    \foreach \j in {-1,...,3} {
        \pgfmathsetmacro{\x}{((\j*4)+5)*\wid+0.5*\wid}%
        \pgfmathsetmacro{\y}{(-(\j*4)+14)*\wid+0.5*\wid}%
        \node[] at (\x,\y) {$x_2$};
    }
    \foreach \j in {-1,...,3} {
        \pgfmathsetmacro{\x}{((\j*4)+5)*\wid+0.5*\wid}%
        \pgfmathsetmacro{\y}{(-(\j*4)+13)*\wid+0.5*\wid}%
        \node[] at (\x,\y) {$x_1$};
    }
    \foreach \j in {-1,...,3} {
        \pgfmathsetmacro{\x}{((\j*4)+5)*\wid+0.5*\wid}%
        \pgfmathsetmacro{\y}{(-(\j*4)+12)*\wid+0.5*\wid}%
        \node[] at (\x,\y) {$1$};
    }
    \draw[blue, thick] (14*\wid, 11*\wid) -- (9*\wid, 11*\wid);
    \draw[blue, thick] (14*\wid, 11*\wid) -- (14*\wid, 24*\wid);
    \draw[blue, thick] (9*\wid, 11*\wid) -- (9*\wid, 24*\wid);
    
    \draw[red, thick] (13*\wid, 9*\wid) -- (9.1*\wid, 9*\wid);
    \draw[red, thick] (13*\wid, 9*\wid) -- (13*\wid, 24*\wid);
    \draw[red, thick] (9.1*\wid, 9*\wid) -- (9.1*\wid, 24*\wid);
\end{tikzpicture}

\begin{tikzpicture}[scale=0.8,transform shape]
\pgfmathsetmacro{\wid}{0.5}

\draw[LightSkyBlue1, step=\wid] (0,0) grid (12,12);
    \node[font=\tiny] at (0.5*\wid, 12.2){$\cdots$};
    \foreach  \j  in {2,...,23} {
       \pgfmathtruncatemacro{\x}{\j-10}%
       \node[font=\tiny] at (\j*\wid-0.5*\wid,12.2) {$\x$};
    };
    \node[font=\tiny] at (0.5*\wid+11.5, 12.2){$\cdots$};

    \node[font=\tiny] at (-0.3, 0.5*\wid){$\vdots$};
    \foreach  \j  in {2,...,23} {
       \pgfmathtruncatemacro{\x}{15-\j}%
       \node[font=\tiny] at (-0.3,\j*\wid-0.5*\wid) {$\x$};
    };
    \node[font=\tiny] at (-0.3, 0.5*\wid+11.5){$\vdots$};

    \foreach \j in {0,...,5} {
        \pgfmathsetmacro{\x}{((\j*4)+2)*\wid+0.5*\wid}%
        \pgfmathsetmacro{\y}{(-(\j*4)+23)*\wid+0.5*\wid}%
        \node[] at (\x,\y) {$1$};
        \draw[gray] (\x+0.2*\wid,\y) -- (12,\y);
        \draw[gray] (\x, \y-0.3*\wid) -- (\x, 0);
    }
    \foreach \j in {0,...,5} {
        \pgfmathsetmacro{\x}{((\j*4)+1)*\wid+0.5*\wid}%
        \pgfmathsetmacro{\y}{(-(\j*4)+23)*\wid+0.5*\wid}%
        \node[] at (\x,\y) {$e$}; 
    }

    \foreach \j in {0,...,5} {
        \pgfmathsetmacro{\x}{((\j*4)+0)*\wid+0.5*\wid}%
        \pgfmathsetmacro{\y}{(-(\j*4)+21)*\wid+0.5*\wid}%
        \node[] at (\x,\y) {$e$}; 
    }
    
    \foreach \j in {0,...,5} {
        \pgfmathsetmacro{\x}{((\j*4)+1)*\wid+0.5*\wid}%
        \pgfmathsetmacro{\y}{(-(\j*4)+22)*\wid+0.5*\wid}%
        \node[] at (\x,\y) {$1$};
        \draw[gray] (\x+0.2*\wid,\y) -- (12,\y);
        \draw[gray] (\x, \y-0.3*\wid) -- (\x, 0);
    }
    
    \foreach \j in {0,...,5} {
        \pgfmathsetmacro{\x}{((\j*4)+3)*\wid+0.5*\wid}%
        \pgfmathsetmacro{\y}{(-(\j*4)+21)*\wid+0.5*\wid}%
        \node[] at (\x,\y) {$1$};
        \draw[gray] (\x+0.2*\wid,\y) -- (12,\y);
        \draw[gray] (\x, \y-0.3*\wid) -- (\x, 0);
    }

    \foreach \j in {0,...,5} {
        \pgfmathsetmacro{\x}{((\j*4)+0)*\wid+0.5*\wid}%
        \pgfmathsetmacro{\y}{(-(\j*4)+20)*\wid+0.5*\wid}%
        \node[] at (\x,\y) {$1$};
        \draw[gray] (\x+0.2*\wid,\y) -- (12,\y);
        \draw[gray] (\x, \y-0.3*\wid) -- (\x, 0);
    }
    
    \draw[blue, thick] (14*\wid, 11*\wid) -- (9*\wid, 11*\wid);
    \draw[blue, thick] (14*\wid, 11*\wid) -- (14*\wid, 24*\wid);
    \draw[blue, thick] (9*\wid, 11*\wid) -- (9*\wid, 24*\wid);
    
    \draw[red, thick] (13*\wid, 9*\wid) -- (9.1*\wid, 9*\wid);
    \draw[red, thick] (13*\wid, 9*\wid) -- (13*\wid, 24*\wid);
    \draw[red, thick] (9.1*\wid, 9*\wid) -- (9.1*\wid, 24*\wid);

\end{tikzpicture}
    \caption{Computing $\mathcal{X}^w_\circ \cap \mathcal{X}_v$ for $w=[-5,5,0,10]$ and $v=[-1,1,6,4]$}
    \label{fig:bigex}
\end{figure}

\end{example}

\subsection{Equations for Kazhdan-Lusztig varieties in $Fl_0(V)$}
Let $v,w\in W_0$ such that $v\le w$. Let $\mathcal{X}^w_\circ$ be parametrized by $m_{Q(w)}$ as in Section~\ref{subsec:param}. We are ready to describe the equations defining $\mathcal{X}^{w}_{\circ,v}$ in these coordinates.

\begin{lemma}
\label{lem:wvwindow}
Let $v,w\in W_0$ such that $v\le w$.
For all $(i,j)\in \mathbb{Z}\times \mathbb{Z}$, $l_w(i,j)\ge l_v(i,j)$.
\end{lemma}

\begin{proof}
    Since $v\le w$, we have $v^{-1}\le w^{-1}$. We then have for any $i,j$,
    \[|\Z_{\le i}-v^{-1}\Z_{\le j}| = \dim(E_i/(E^{v^{-1}}_j\cap E_i))\le \dim (E_i/(E^{w^{-1}}_j\cap E_i))=|\Z_{\le i}-w^{-1}\Z_{\le j}|.\]
    Note that for any affine permutation $w$, the quantity $|\Z_{\le i}-w^{-1}\Z_{\le j}|$ is the number of 1's strictly east of column $j$ and weakly north of row $i$ in the permutation matrix of $w^{-1}$, and therefore
    is also the number of 1's strictly south of row $j$ and weakly west of column $i$ in the permutation matrix of $w$.
    Suppose to the contrary that there is some $(j,i)$ such that $l_w(j,i)<l_v(j,i)$.
    Then at the coordinate $p=(j,i-l_w(j,i))$, the number of 1's  strictly south and weakly west of $p$ is 0 in the permutation matrix of $w$ but positive in the permutation matrix of $v$. Contradiction.
\end{proof}

Let $M:=\phi(m_{Q(w)}(x_1,\cdots, x_{\ell(w)}))$. 
For each $(i,j)\in \ess(v)$, Let $EQ^{i,j}_{w,v}$ denote the set of all minors of size $n_v(i,j,l_w(i,j))+1$ in the submatrix $M_{(-\infty, i],[j-l_w(i,j)+1, j]}$. The subset of nonzero minors is a finite set, since the submatrix $M_{(-\infty, i],[j-l_w(i,j)+1, j]}$ has all zeroes in rows with sufficiently small indices.

\begin{lemma}
\label{lem:stable}
Suppose $(i,j)\in \ess(v)$ and $l\ge l_w(i,j)$. Then the ideal generated by the set $EQ$ of minors of size $n_v(i,j,l)+1$ in the submatrix $M_{(-\infty,i],[j-l+1,j]}$ is equal to the ideal generated by $EQ^{i,j}_{w,v}$, the set of minors of size $n_v(i,j,l_w(i,j))+1$ in the submatrix $M_{(-\infty,i],[j-l_w(i,j)+1,j]}$. Furthermore, $EQ^{i,j}_{w,v}\subset EQ$.

\end{lemma}
\begin{proof}
Suppose the statement is true for $l\ge l_w(i,j)$ and we will show that it is true for $l+1$. First note that for all $b\le j-l_w(i,j)$, $v(b)\le i$ and $w(b)\le i$. Therefore, $n_v(i,j,l+1)=n_v(i,j,l)+1$. It is easy to see that a minor of size $n_v(i,j,l)+2$ in $M_{(-\infty,i],[j-l,j]}$ is in the ideal generated by a minor of size $n_v(i,j,l)+1$ in $M_{(-\infty,i],[j-l+1,j]}$. We just need to show that each nonzero minor of size $n_v(i,j,l)+1$ in $M_{(-\infty,i],[j-l+1,j]}$ is also a minor of size $n_v(i,j,l)+2$ in $M_{(-\infty,i],[j-l,j]}$. Suppose $\det M_{A,B}$ is a nonzero minor of size $n_v(i,j,l)+1$ in $M_{(-\infty,i],[j-l+1,j]}$. If it were the case that $w(j-l)\in A$, then $M_{A,B}$ would contain a row of zeroes, causing the determinant to vanish. Therefore $w(j-1)\not\in A$. Then Let $A':=A\cup\{w(j-l)\}$ and $B':=B\cup\{j-l\}$. Then $\det M_{A',B'}=\det M_{A,B}$, since row $w(j-l)$ has a unique 1 in column $j-l$.
\end{proof}

\begin{prop}
\label{prop:setKL}
The set of equations \[EQ_{w,v}:=\bigcup_{(i,j)\in \ess(v)}EQ_{w,v}^{i,j}=\coprod_{\substack{(i,j)\in \ess(v)\\1\le j\le n}}EQ_{w,v}^{i,j}\] are
set-theoretic equations for $\mathcal{X}_v^w$.
\end{prop}
\begin{proof}
The  equality between unions follows from the periodicity of affine $\infty\times\infty$ matrices.
 By Theorem~\ref{thm:seteq} and Lemma~\ref{lem:wvwindow}, if $\phi(F)$ for $F\in \Im(m_{Q(w)})$ satisfies all equations in $EQ$, we have $[F]\in \mathcal{X}^w_v$. On the other hand, given $F\in \Im(m_{Q(w)})$ such that $[F]\in\mathcal{X}_v$, by Theorem~\ref{thm:seteq}, for each $i,j\in \ess(v)$ there exists some $l\ge l_v(i,j)$ such that all minors of size $n_v(i,j,l)+1$ in the submatrix $F_{(-\infty, i],[j-l+1,j]}$ vanish. Note that if $l'\ge l$, the vanishing of all minors of size $n_v(i,j,l)+1$ in the submatrix $F_{(-\infty, i],[j-l+1,j]}$ implies the vanishing of all minors of size $n_v(i,j,l')+1$ in the submatrix $F_{(-\infty, i],[j-l'+1,j]}$. By Lemma~\ref{lem:stable}, the minors determined by $l=l_w(i,j)$ give the weakest conditions, and therefore have to vanish.
\end{proof}
 
 \begin{example}
\label{ex:eqn}
Let $w=[-5,5,0,10]$ and $v=[-1,1,6,4]$.
$Q(w)$ is computed in Example~\ref{ex:chart}.
Then for $(3,4)\in \ess(v)$, $l_w(3,4)=5$, $n_v(3,4,5)=3$ (which is the number of ones in the blue window of the bottom matrix in Figure~\ref{fig:bigex}), so $EQ_{w,v}^{3,4}$ consists of the size 4 minors in the blue window of the top matrix in Figure~\ref{fig:bigex}. Explicitly, 
\begin{equation*} 
\begin{split}
 EQ_{w,v}^{3,4}  = \{ & x_{10}x_7x_3-x_{10}x_6x_5-x_9x_3+x_8x_5, \\
  & x_{11}x_7x_3-x_{11}x_6x_5+x_9x_8x_6-x_8^2x_7, \\
  & x_{11}x_3+x_{10}x_8x_6-x_8^2, \\
  & -x_{11}^2x_5-x_{11}x_{10}x_8x_7+x_{11}x_9x_8, \\
  & -x_{11}x_5-x_{10}x_8x_7+x_9x_8\}.
\end{split}
\end{equation*}

For $(5,3)\in \ess(v)$, $l_w(5,3)=4$, $n_v(5,3,5)=3$ (which is the number of ones in the red window of the bottom matrix in Figure~\ref{fig:bigex}), so $EQ_{w,v}^{5,3}$ consists of the size 4 minors in the red window of the top matrix in Figure~\ref{fig:bigex}.
Explicitly, 
\begin{equation*} 
\begin{split}
EQ_{w,v}^{5,3}=\{ & x_{10}x_7x_3-x_{10}x_6x_5-x_9x_3+x_8x_5, \\  
& x_{10}x_7x_2-x_{10}x_6x_4-x_9x_2+x_8x_4, x_{10}x_5x_2-x_{10}x_4x_3, x_5x_2-x_4x_3, \\ & x_{10}x_7x_1-x_{10}x_6-x_9x_1+x_8, x_{10}x_5x_1-x_{10}x_3, x_5x_1-x_3, \\
& x_{10}x_4x_1-x_{10}x_2, x_4x_1-x_2\}.
\end{split}
\end{equation*}

Finally, $EQ_{w,v}=EQ_{w,v}^{3,4}\sqcup EQ_{w,v}^{5,3}$.

 \end{example}

 \section{Gr\"obner basis for Kazhdan-Lusztig ideals}
  \label{sec:gb}
  
 In a polynomial ring $\mathbb{C}[x_1,\cdots, x_k]$, we fix a monomial ordering to be lexicographic with $x_1<x_2<\cdots <x_k$.
 Let $v,w\in W_0$ with $v\le w$. The \textbf{lead term} of a polynomial $g$ is the largest monomial summand of $g$ with respect to this monomial order.
 Let $I_{w,v}:=I_{w,v}^{Q(w)}\subset \mathbb{C}[x_1,\cdots, x_{\ell(w)}]$ be the ideal for the Kazhdan-Lusztig variety $\mathcal{X}_v^w$ in the Bott-Samelson coordinate determined by $Q(w)$. The goal of this section is to prove the following theorem:
  \begin{thm}
  The polynomials in $EQ_{w,v}$ form a Gr\"obner basis for the Kazhdan-Lusztig ideal $I_{w,v}$.
  \end{thm}
 Let $Q=(s_{\alpha_1},\cdots, s_{\alpha_{\ell(w)}})$ be a reduced word for $w$.  
 In $\mathbb{C}[x_1,\cdots, x_{\ell(w)}]$, define
 \[SR(Q,v):=\bigcap_{\substack{Q'\subset Q \text{ reduced subword}\\ \prod Q' =v}} I_{Q'},\]
 where $I_{Q'}:=\tup{x_i:{s_{\alpha_i}\in Q'}}$.
 The following theorem is due to Knutson, see \cite[Theorem 4]{knutson2008schubert}. The statement is type-independent.
 \begin{thm}
 \label{thm:knutson}
 If $Q$ is a reduced word for $w$ and $I_{w,v}^Q$ is the ideal for the Kazhdan-Lusztig variety $\mathcal{X}^w_v$ in the Bott-Samelson coordinates determined by $Q$, then 
 \[\init I^Q_{w,v}=SR(Q,v).\]
 \end{thm}
\begin{rmk}
\label{rmk:KLinit}
    Although the general setting in \cite{knutson2008schubert} assumes the flag variety to be finite dimensional, we note that this assumption is not necessary for the statements about Kazhdan--Lusztig varieties. In particular, \cite[Theorem 4]{knutson2008schubert} relies on \cite[Theorem 2']{knutson2008schubert}, which can be proved by direct analogues of \cite[Proposition 6 and 7]{knutson2007automatically} by degenerating (in Knutson's notation where $w\le v$) $X_w\cap (X^{v}_\circ \cup X^{vr_{\alpha}}_\circ)$ inside $X^{v}_\circ \cup X^{vr_{\alpha}}_\circ$. The proofs for Kazhdan-Lusztig varieties are simpler and we give a sketch in Appendix~\ref{app}. These arguments do not assume finite-dimensionality of the ambient flag varieties. Relatedly, in \cite[Theorem 7]{knutson2009frobenius}, Knutson also showed that this degeneration is compatible with the Frobenius splitting on $X^v_\circ$ that compatibly splits the Kazhdan-Lusztig varieties in a  type-independent manner. 
\end{rmk}

 \begin{rmk} 
 \label{rmk:equivariant}
 A formula for the $K$-polynomials of Stanley-Reisner rings of  subword complexes is given in
\cite[Theorem 4.1]{knutson2004subword}. 
For $Q=(s_{\alpha_1},\cdots, s_{\alpha_{\ell(w)}})$ a reduced word for $w$, set $\beta_i:=s_{\alpha_1}\cdots s_{\alpha_{i-1}}\widetilde{\alpha}_i \in \Z[y_1^\pm,\cdots, y_n^\pm]$, where $\widetilde{\alpha_i}:=y_{\alpha_i}/y_{\alpha_i+1}$ if $1\le \alpha_i \le n-1$ and $\widetilde{0}=y_n/y_1$. 
Setting the (exponential) weight of the variable $x_i$ to be $\beta_i^{-1}$ and applying \cite[Theorem 4.1]{knutson2004subword}, we get
\[\mathcal{K}(I^Q_{w,v};\mathbf{y}) = \mathcal{K}(SR(Q,v);\mathbf{y})=\sum_{\substack{Q'\subset Q \text{ subword}\\ \Dem Q' =v}} (-1)^{\ell(Q')-\ell(v)} \prod_{s_{\alpha_i}\in Q'} (1-\beta_i^{-1}),\]
we recover \cite[Theorem 2.6]{graham2015excited} formula in the affine type $A$. (We remark that Willem \cite[Theorem 4.7]{willems2004equivariant}) has a similar formula in the Kac-Moody setting with a different basis.) If we instead weigh the variable $x_i$ with $\beta_i$ and consider the lowest degree term of the twisted $K$-polynomial $\mathcal{K}(I^Q_{w,v};\mathbf{1-y})$ (cf.\cite[Section 1.2]{knutson2005grobner}), we are able to obtain Andersen–Jantzen–Soergel\cite{andersen1994representations}/Billey\cite{billey}'s formula in $T$-equivariant cohomology. See \cite{knutson2008schubert} for a similar discussion.
 \end{rmk}

 Let \[J_{w,v}:=\tup{\LT(g):g\in EQ_{w,v}},\]
 where $\LT(g)$ denotes the lead term of $g$ in the fixed monomial order.
 By Proposition~\ref{prop:setKL}, $EQ_{w,v}\subset I_{w,v}$, so $J_{w,v}\subset \init I_{w,v}$. We will show the following key technical statement:
 \begin{prop}
  \label{prop:SRinJ}
  $SR(Q(w),v)\subset J_{w,v}$.
 \end{prop}
  This, together with Theorem~\ref{thm:knutson}, imply that $EQ_{w,v}$ form a Gr\"obner basis for $I_{w,v}$. The structure of our technical proofs follows that of \cite{woo2012grobner}, with some details modified to suit the affine type $A$ setting. Before proceeding with the proof, we first demonstrate the Gr\"obner basis statement for Example~\ref{ex:eqn}.
  
  \begin{example}
  Let $w=[-5,5,0,10]$ and $v=[-1,1,6,4]$ as in Example~\ref{ex:eqn}. Recall that
  \[Q(w)=s_1s_0s_3s_1s_0s_2s_3s_1s_2s_3s_0.\]
  The reduced words of $v$ consists of $s_1s_0s_3s_0$, $s_1s_3s_0s_3$, and $s_3s_1s_0s_3$. The possible subwords of $Q(w)$ that are reduced words for $v$ are illustrated in the following table:
  \begin{center}
  \scalebox{0.8}{
  \begin{tabular}{|c|c|c|c|c|c|c|c|c|c|c|}
  \hline
    $s_1$ & $s_0$ & $s_3$ & $s_1$ & $s_0$ & $s_2$ & $s_3$ & $s_1$ & $s_2$ & $s_3$ & $s_0$ \\ \hline
    $-$ & $-$ & $-$ &  & $-$ &  &  &  &  &  &  \\ \hline
    $-$ & $-$ & $-$ &  &     &  &  &  &  &  & $-$ \\ \hline
    $-$ & $-$ &     &  &     &  & $-$ &  &  &  & $-$ \\ \hline
    $-$ & $-$ &     &  &     &  &  &  &  & $-$ & $-$ \\ \hline
    $-$ &     & $-$ &  & $-$ &  & $-$ &  &  &  &  \\ \hline
    $-$ &     & $-$ &  & $-$ &  &     &  &  & $-$ &  \\ \hline
    $-$ &     &     &  & $-$ &  & $-$ &  &  &  & $-$ \\ \hline
    $-$ &     &     &  & $-$ &  &     &  &  & $-$ & $-$ \\ \hline
        &     & $-$ & $-$ & $-$ &  & $-$ &  &  &  &  \\ \hline
        &     & $-$ & $-$ & $-$ &  &  &  &  & $-$ &  \\ \hline
        &     &     & $-$ & $-$ &  & $-$ &  &  &  & $-$ \\ \hline
        &     &     & $-$ & $-$ &  &     &  &  & $-$ & $-$ \\ \hline
  \end{tabular} }
  \end{center}
  On the other hand, the ideal generated by the lead terms of the polynomials in $EQ_{w,v}$ is \[
  \begin{aligned}
    J_{w,v} = &\langle x_{10}x_7x_3, x_{10}x_7x_2,x_5x_2, x_{10}x_7x_1,x_5x_1,x_4x_1,x_{11}x_3,x_{11}x_5\rangle \\
   = & \langle x_1,x_2,x_3,x_5 \rangle \cap \langle x_1,x_2,x_3,x_{11} \rangle \cap \langle x_1,x_2,x_7,x_{11} \rangle  \\
  & \cap \langle x_1,x_2,x_{10},x_{11} \rangle  
   \cap \langle x_1,x_3,x_5,x_7 \rangle 
   \cap \langle x_1,x_3,x_5,x_{10} \rangle \\ 
  & \cap \langle x_1,x_5,x_7,x_{11} \rangle 
   \cap \langle x_1,x_5,x_{10},x_{11} \rangle  
   \cap \langle x_3,x_4,x_5,x_7 \rangle  \\
  & \cap \langle x_3,x_4,x_5,x_{10} \rangle 
   \cap \langle x_4,x_5,x_7,x_{11} \rangle  
   \cap \langle x_4,x_5,x_{10},x_{11} \rangle \\
  & =SR(Q(w),v)
  \end{aligned} 
  \]
  \end{example}

We recall the basics of subword complexes, defined and studied in \cite{knutson2004subword}. For any reduced word $Q=(s_{\alpha_1},\cdots, s_{\alpha_{\ell(w)}})$ for $w$, and $v$ a permutation with $v\le w$,  define the \textbf{subword complex} $\Delta(Q,v)$ as follows. Let $V:=\{1,2,\cdots, \ell(w)\}$ be the vertex set of $\Delta(Q,v)$, and $F\subset V$ be a facet if and only if the subword of $Q$ formed by simple reflections in positions $V\setminus F$, which we denote as $Q_{V\setminus F}$, is a reduced word for $v$. More generally, $F\in\Delta(Q,v)$ is a face if and only if the Demazure product $\Dem(Q_{V\setminus F})\ge v$. Recall that the Demazure product  of a (not necessarily reduced) word can be defined inductively as $\Dem(Q,s_i)=\Dem(Q)$ if $\ell(\Dem(Q)s_i)<\ell(\Dem(Q))$, and $\Dem(Q)s_i$ otherwise.

By the Stanley-Reisner correspondence, we have \[SR(Q,v)=\tup{\prod_{a\in  P} x_{a}:P\not\in \Delta(Q,v)}.\] In other words, $SR(Q,v)$ is generated by monomials given by the nonfaces of $\Delta(Q,v)$. 

We collect some facts about vertex decomposition for abstract simplicial complexes and in particular, subword complexes. 

For any simplicial complex $\Delta$ and $q$ a vertex of $\Delta$,
the \textbf{deletion} of $q$ is the set of faces that do not contain $q$:
\[\del_q(\Delta)=\{F\in \Delta: q\not\in F\}.\]
The \textbf{link} of $q$ is 
\[\link_q(\Delta)=\{F\in \del_q(\Delta): F\cup\{q\}\in\Delta \}\]

The following two lemmas give some useful inductive properties of deletion and link for simplicial complexes (Lemma~\ref{lem:simplicial_induction}) and subword complexes (Lemma~\ref{lem:subword_induction}).
\begin{lemma}
\label{lem:simplicial_induction}
\cite[Lemma 6.3]{woo2012grobner} Let $S$ be a subset of the vertices of $\Delta$, and $q$ a fixed vertex of $\Delta$. Then $S\not\in \Delta$ if and only if either (a) or (b) holds:
\begin{enumerate}[(a)]
    \item $q\in S$, and $S\setminus\{q\}\not\in\link_q(\Delta)$
    \item $q\not\in S$, and $S\not\in \del_q(\Delta)$.
\end{enumerate}
\end{lemma}

\begin{lemma}
\label{lem:subword_induction}
\cite[Theorem 6.4]{woo2012grobner}
Suppose $Q=(Q',s_\sigma)$ and $q=\ell(Q)$. Then 
\begin{enumerate}[(a)]
    \item If $\ell(vs_\sigma)>v$, then $\link_q(\Delta(Q,v))=\del_q(\Delta(Q,v))=\Delta(Q', v)$. 
    \item If $\ell(vs_\sigma)<v$, then $\link_q(\Delta(Q,v))=\Delta(Q',v)$ and $\del_q(\Delta(Q,v))=\Delta(Q',vs_\sigma)$.
\end{enumerate}
\end{lemma}

The following Lemma parallels \cite[Proposition 6.15]{woo2012grobner}. While it shares the same structure, affine type $A$ presents additional technical challenges that have to be addressed specifically in the proof. We also supply worked out examples after the proof for clarity.
\begin{lemma}
\label{lem:main}
Let $\sigma:=\sigma(w)$, namely, $Q(w)=(Q(ws_\sigma),s_\sigma)$. 
\begin{enumerate}[(a)]
    \item \label{thm:casea} Suppose $\ell(vs_\sigma)>\ell(v)$ and $g\in EQ_{ws_\sigma,v}$, then there exists $g'\in EQ_{w,v}$ such that $\LT(g')$ divides $\LT(g)$.
    \item \label{thm:caseb} Suppose $\ell(vs_\sigma)<\ell(v)$ and $g\in EQ_{ws_\sigma,v}$, then there exists $g'\in EQ_{w,v}$ such that $\LT(g')$ divides $x_{\ell(w)}\LT(g)$.
    \item \label{thm:casec} Suppose $\ell(vs_\sigma)<\ell(v)$ and $g\in EQ_{ws_\sigma,vs_\sigma}$, then there exists $g'\in EQ_{w,v}$ such that $\LT(g')$ divides $\LT(g)$.
\end{enumerate}
\end{lemma}
\begin{proof}
Let $M:=\phi(m_{Q(ws_\sigma)}(\mathbf{x}))$ and $M'=\phi(m_{Q(w)}(\mathbf{x}))$. 

\noindent \textbf{Case (a).} 
Suppose $g\in EQ^{i,j}_{ws_\sigma,v}$, a minor of size $n_v(i,j,l_{ws_\sigma}(i,j))+1$ that is given by $(i,j)\in \ess(v)$.
Notice that $l_w(i,j)=l_{ws_{\sigma}}(i,j)$ or $l_w(i,j)=l_{ws_{\sigma}}(i,j)+1$. In the latter case, by Lemma~\ref{lem:stable}, we may consider $g$ instead as a minor of size $n_v(i,j,l_{ws_\sigma}(i,j))+2=n_v(i,j,l_{w}(i,j))+1$ in $M_{(-\infty,i],[j-l_w(i,j)+1,j]}$.
 Suppose 
\[g=\det M_{A,B}\]
where $A\subset \Z$ is a set of row indices, $B\subset \Z$ is a set of column indices, and $|A|=|B|=n_v(i,j,l_{w}(i,j))+1$. We will construct a set $B'$ such that $g':=\det M'_{A,B'}\in EQ_{w,v}^{i,j}$, and $\LT(g')$ divides $\LT(g)$. Recall that $M'$ can be obtained from $M$ by replacing the $0$ with $x_{\ell(w)}$ at position $(ws_{\sigma}(b), b+1)$, and swapping columns $b$ and $b+1$ for each $b\in [\sigma]_n$. We call this operation $\bsswap(b)$ for each $b\in [\sigma]_n$. 

To initialize, set $B_1:=B$ and $M_1:=M$. Also set $B_{\operatorname{used}}=\emptyset$. We give an algorithm that iteratively performs $\bsswap(b)$ in a certain order on $M_1$ that eventually gives $M'$ and also modifies $B_1$ that eventually gives $B'$. (In other words, we treat $M_1$ and $B_1$ as ``mutable objects''.)
\begin{itemize}
    \item[Step 1.] For each $b\in B$, if $b\in [\sigma]_n$ but $b+1\not\in B$: 
    \begin{itemize}
        \item Replace $b$ with $b+1$ in $B_1$;
        \item Add $b$ to $B_{\operatorname{used}}$;
        \item Perform $\bsswap(b)$ on $M_1$.
        
    \end{itemize}
    \item[Step 2.] For each $b\in B$ in increasing order, if $b\in[\sigma+1]_n$ but $b-1\not\in B$:
    \begin{itemize}
    \item If $ws_\sigma(b-1)\not\in A$: 
    \begin{itemize}
        \item Replace $b$ with $b-1$ in $B_1$;
        \item Add $b-1$ to $B_{\operatorname{used}}$;
        \item Perform $\bsswap(b-1)$ on $M_1$.
        
    \end{itemize}
    \item If $ws_\sigma(b-1)\in A$:
    \begin{itemize}
        \item Let $c$ be the column index of the entry in row $ws_\sigma(b-1)$ of $M_1$ that contributes to $\LT(\det (M_1)_{A,B_1})$.
        Replace $c$ with $b-1$ in $B_1$;
        \item Add $b-1$ to $B_{\operatorname{used}}$;
        \item Perform $\bsswap(b-1)$ on $M_1$.
    \end{itemize}
    \end{itemize}
    \item[Step 3.] For each $b\in[\sigma]_n\setminus B_{\operatorname{used}}$, perform $\bsswap(b)$ on $M_1$.
\end{itemize}

After the final step, $M_1=M'$. We set $B':=B$. We now argue that the algorithm is valid, and after each operation of $\bsswap$ in the process described above, $B_1\subset [j-l_w(i,j)+1,j]$ and  $\LT(\det ((M_1)_{A,B_1}))$ divides $\LT(M_{A,B})$.

First, we must argue that if $b\in B$ and $b\in[\sigma]_n$, then $b+1 \le j$. Suppose otherwise, then it must be the case that $b=j$. Since $\ell(vs_\sigma)>\ell(v)$, $v(b+1)>v(b)$, and therefore $\ess(v)$ does not contain any boxes in columns $b\in[\sigma]_n$, so $j\not\in [\sigma]_n$. But this means $b\not\in [\sigma]_n$; contradiction. Therefore $B_1 \subset [j-l_w(i,j)+1,j]$ during Step 1. Furthermore, before performing $\bsswap(b)$ on $M_1$ in Step 1, the column $b$ of $M_1$ contains a single 1 in row $ws_\sigma(b)=w(b+1)$, and after the operation the column $b+1$ of $M_1$ contains a single 1 in the same row. (Note also that for this reason if $\det(M_1)_{A,B_1}\neq 0$ we must have $ws_\sigma(b)=w(b+1)\in A$.) Therefore, the submatrix  $(M_1)_{A,B_1}$ does not change as a square matrix during Step 1. 

Suppose we are in Step 2. 
First we argue that if $b\in B$ and $b\in [\sigma+1]_n$, then $b-1\ge j-l_w(i,j)+1$. Suppose otherwise, then it must be the case that $b=j-l_w(i,j)+1$, so $j-l_w(i,j)\in[\sigma]_n$. By definition of $l_w(i,j)$, we must have $w(j-l_w(i,j))<w(j-l_w(i,j)+1)$, so $w(b-1)<w(b)$. This contradicts that $ws_\sigma < w$, since $b-1\in [\sigma]_n$. 
Therefore $B_1 \subset [j-l_w(i,j)+1,j]$ during Step 2.

Now consider an iteration of the loop in the case $ws_\sigma(b-1)\not\in A$ in Step 2.
In this case, the column $(M_1)_{A,b}$ before performing $\bsswap(b-1)$ is the same as column $(M_1)_{A,b-1}$ after the operation. Therefore, the submatrix  $(M_1)_{A,B_1}$ does not change as a square matrix in this case of Step 2.

Now consider an iteration of the loop in the case $ws_\sigma(b-1)\in A$ in Step 2. First note that $(M_1)_{w(b-1),c}$ is not 1, because the 1 in row $ws_\sigma(b-1)$ is in column $b-1$, and by assumption $b-1\not\in B$ and was not added to $B_1$ in Step 1. Furthermore, since $b-1\in [\sigma]_n$, all entries in row $ws_\sigma(b-1)$ of $M_1$ to the right of position $(ws_\sigma(b-1),b-1)$ are 0, so we must have $c<b-1$. Therefore, since we iterate through $b\in B$ in increasing order, $c$ cannot be added back into $B_1$ in the following iterations of the loop.

Before performing $\bsswap(b-1)$, 
\[(M_1)_{A,c}=\left(\begin{smallmatrix}
   * \\
   \vdots \\
   * \\
   (M_1)_{w(b-1),c} \\
   * \\
   \vdots\\
   * 
\end{smallmatrix}\right), (M_1)_{A,b}=\left(\begin{smallmatrix}
   0 \\
   \vdots \\
   0 \\
   0_{w(b-1),b} \\
   d_1 \\
   \vdots\\
   d_k
\end{smallmatrix} \right),\]
where subscript on the 0 specifies its position in $M_1$, and $d_1,\cdots, d_k$ are corresponding matrix entries.  After performing $\bsswap(b-1)$, 
\[(M_1)_{A,b-1}=\left(\begin{smallmatrix}
   0 \\
   \vdots \\
   0 \\
   x_{\ell(w)} \\
   d_1 \\
   \vdots\\
   d_k
\end{smallmatrix} \right),\hskip 1.5em   (M_1)_{A,b}=\left(\begin{smallmatrix}
   0 \\
   \vdots \\
   0 \\
   1 \\
   0 \\
   \vdots\\
   0 
\end{smallmatrix}\right),\]
Let $g_\text{before}$ and $g_\text{after}$ denote $\det (M_1)_{A,B_1}$ before and after the iteration of this loop. Since our monomial order is a lex order, $\LT(g_\text{before})=\pm (M_1)_{w(b-1),c}\LT(g_\text{after})$.

In Step 3, the operations $\bsswap(b)$ do not change $\pm\det(M_1)_{A,B_1}$, because if $b_1\in B_1$, $b_1\not\in [\sigma]_n$ and $b_1\not\in [\sigma+1]_n$, $\bsswap(b)$ does not affect column $b_1$. Otherwise, for each $b\in [\sigma_n]\setminus B_\text{used}$, it must be the case that $b\in B_1$ if and only if $b+1 \in B_1$. It is easy to see that in this case $\bsswap(b)$ does not change $\pm\det(M_1)_{A,B_1}$.

Therefore, the algorithm constructs $B'$ such that $\LT(g')$ divides $\LT(g)$. Example \ref{ex:Case(a)} shows an instance of this case.
\vskip 1em

 \noindent\textbf{Case (b).} The argument for this case is mostly the same for Case (a), except for the part in Step 1 of the algorithm where the assumption $\ell(vs_\sigma)>\ell(v)$ is used. We modify Step 1 in the algorithm of Case (a) as follows:
 
\begin{itemize}
    \item[Step 1.] For each $b\in B$, if $b\in [\sigma]_n$ but $b+1\not\in B$: 
    \begin{itemize}
        \item Replace $b$ with $b+1$ in $B_1$ if $b+1 \le j$, otherwise do nothing.
        \item Add $b$ to $B_{\operatorname{used}}$;
        \item Perform $\bsswap(b)$ on $M_1$.
    \end{itemize}
\end{itemize}
 
We now address the case when $b=j$ in Step 1. This is possible, because $\ell(vs_\sigma)<\ell(v)$ implies $v$ has at least one essential box in each column in $[\sigma]_n$. Note that for the minor under consideration to be nonzero we must have $ws_\sigma(j)=w(j+1)\in A$.
Before $\bsswap(j)$ is performed, 
\[(M_1)_{A,j}=\left(\begin{smallmatrix}
   0 \\
   \vdots \\
   0 \\
   1_{ws_\sigma(j),j} \\
   0 \\
   \vdots\\
   0 
\end{smallmatrix}\right)\]
where the subscript on the 1 specifies its matrix coordinate, and after 
$\bsswap(j)$ is performed, 
\[(M_1)_{A,j}=\left(\begin{smallmatrix}
   0 \\
   \vdots \\
   0 \\
   x_{\ell(w)} \\
   * \\
   \vdots\\
   * 
\end{smallmatrix}\right),\]
where $x_{\ell(w)}$ is in position $(ws_\sigma(j),j)=(w(j+1),j)$ of $M_1$. 

Let $g_\text{before}$ and $g_\text{after}$ denote $\det (M_1)_{A,B_1}$ before and after $\bsswap(j)$ is performed. Since $x_{\ell(w)}$ is the lex-largest variable, we must have $x_{\ell(w)}\LT(g_\text{before})=\LT(g_\text{after})$. (Note that $b$ can equal to $j$ exactly once, so we cannot have higher powers of $x_{\ell(w)}$.)
The rest of the argument proceeds the same as in Case (a), and the algorithm constructs $B'$ such that for $g=\det M_{A,B}$ and $g'=\det M'(A,B')$,
$\LT(g')$ divides $x_{\ell(w)}\LT(g)$. Example \ref{ex:Case(b)} shows an instance of this case.
\vskip 1em

\noindent\textbf{Case (c).} If $(i,j)\in\ess(vs_\sigma)$, then $(i,j)\in \ess(v)$, $(i,j+1)\in\ess(v)$, or $(i,j-1)\in \ess(v)$. (Note that it's impossible to have $j \in [\sigma]_n$.) When $(i,j)\in \ess(v)$, we note that $n_{vs_\sigma}(i,j,l_w(i,j))=n_{v}(i,j,l_w(i,j))$. This follows from $l_w(i,j)\ge l_v(i,j)$ by Lemma~\ref{lem:wvwindow}, and therefore $s_\sigma$ cannot switch out a 1 in the window considered. The proof then proceeds in the same way as in Case (a). 
The case where $(i,j+1)\in \ess(v)$ is similar.
Finally, when $(i,j-1)\in \ess(v)$, it is necessarily true that $j\in [\sigma+1]_n$. Let $g\in EQ^{i,j}_{vs_\sigma, ws_\sigma}$.

\vskip 0.5em

\textbf{Subcase $j-l_{ws_\sigma}(i,j)\not\in [\sigma]_n$.} 
In this case, $l_{ws_\sigma}(i,j) = l_w(i,j)$, so $l_w(i,j-1)=l_{ws_\sigma}(i,j)-1$, and $n_{vs_\sigma}(i,j,l_{ws_\sigma}(i,j))=n_v(i,j-1,l_w(i,j-1))+1$. Therefore, $EQ^{i,j}_{ws_\sigma, vs\sigma}$ consists of minors of size $N:=n_{vs_\sigma}(i,j,l_{ws_\sigma}(i,j))+1$ in 
\[M_{(-\infty, i],[j-l_{ws_\sigma}(i,j)+1,j]}=M_{(-\infty, i],[j-l_{w}(i,j)+1,j]}\]
and $EQ^{i,j-1}_{w,v}$ consists of minors of size $N-1$ in \[M'_{(-\infty, i],[j-1-l_w(i,j-1)+1,j-1]}=M'_{(-\infty, i],[j-l_{ws_\sigma}(i,j)+1,j-1]}=M'_{(-\infty, i],[j-l_{w}(i,j)+1,j-1]}.\]
Suppose $g=\det M_{A,B}$, where $|A|=|B|=N$.

Now we repeat the construction in Case (b) and obtain $B'$ such that $g'=\det M'_{A,B'}$ is a minor of size $N$ in $M'_{(-\infty, i],[j-l_{w}(i,j)+1,j]}$. 
\begin{itemize}
    \item If $j\not\in B'$, we have $\LT(g')$ divides $\LT(g)$, because the construction in Case (b) in this case is identical to Case (a). We may take any $N-1$ out of the $N$ positions of $M'_{A,B'}$ that contribute to $\LT(g')$, and let $A'', B''$ be the corresponding row  and column indices determined by these $N-1$ positions. Then, $\LT(\det M'_{A'',B''})$ divides $\LT(g)$, and $\det M'_{A'',B''} \in EQ_{w,v}^{i,j-1}$. 
    
    \item If $j\in B'$, $\LT(g')$ contains the variable $x_{\ell(w)}$ and $\LT(g')$ divides $x_{\ell(w)}\LT(g)$. We take the $N-1$ positions of $M'_{A,B'}$ that contributes to $\LT(g')$ but not equal to $x_{\ell(w)}$, and let $A''$, $B''$ be the corresponding row and column indices. In other words, $A''=A\setminus\{w(j+1)\}$ and $B''=B'\setminus \{j\}$. Then, $\LT(\det M'_{A'',B''})$ divides $\LT(g)$, and $\det M'_{A'',B''} \in EQ_{w,v}^{i,j-1}$. 
\end{itemize}

\textbf{Subcase $j-l_{ws_\sigma}(i,j) \in [\sigma]_n$.} In this case $l_w(i,j)=l_{ws_{\sigma}}(i,j)+1$, so $l_w(i,j-1)=l_{ws_{\sigma}}(i,j)$. Since the number of 1's in the permutation matrix of $vs_\sigma$ strictly below row $i$ and weakly to the left of column $j$ are all strictly to the right of column $j-l_v(i,j)$ and $l_v(i,j)\le l_{ws_\sigma}(i,j)$ by Lemma~\ref{lem:wvwindow}, we see that \[N:=n_{vs_\sigma}(i,j,l_{ws_\sigma}(i,j))=n_v(i,j-1,l_w(i,j-1)).\]
Suppose $g=\det M_{A,B}$ as in the previous Subcase. Let $A_1:=A$. We then construct $A'$ and $B'$ using the algorithm described in Case (a) but modify Step 1 as follows:
\begin{itemize}
    \item[Step 1.]  For each $b\in B$, $b< j-1$ in increasing order, if $b\in [\sigma]_n$ but $b+1\not\in B$: 
    \begin{itemize}
        \item Replace $b$ in $B_1$ with $b+1$; 
        \item Add $b$ to $B_{\operatorname{used}}$;
        \item Perform $\bsswap(b)$ on $M_1$.
    \end{itemize}
    Special step:
    \begin{itemize}
        \item If $j-1\in B$ but $j\not \in B$, replace $j-1$ in $B_1$ with $(j-1)-l_w(i,j-1)+2=j-l_{ws_\sigma}(i,j)+1$ (note that $j-l_{ws_\sigma}(i,j)+1$ might already be in $B_1$; we keep both copies at this point and show one must be removed later), and replace $ws_\sigma(j)$ in $A_1$ with $ws_\sigma(j-l_{ws_\sigma}(i,j))=w((j-1)-l_w(i,j-1)+2)$;
    
        If $j-1\in B$ and $j\in B$, replace $j$ in $B_1$ with $(j-1)-l_w(i,j-1)+2=j-l_{ws_\sigma}(i,j)+1$, and replace $ws_\sigma(j)$ in $A'$ with $ws_\sigma(j-l_{ws_\sigma}(i,j))=w((j-1)-l_w(i,j-1)+2)$;
        
        \item Add $j-1$ to $B_{\operatorname{used}}$;
        \item Perform $\bsswap(j-1)$ on $M_1$.
    \end{itemize}

\end{itemize}
In Step 2, if $b_0:=j-l_{ws_\sigma}(i,j)+1 \in B$, we must be in the case $ws_\sigma(b_0-1)\not\in A$ because $M_{ws_\sigma(b_0-1),b_0-1}=1$ and column $b_0-1$ is outside of the submatrix of $M$ within which we consider minors. Therefore, (one copy of) $b_0$ in $B_1$ is replaced with $b_0-1$. 

Let $g_\text{before}$ and $g_\text{after}$ denote $\det (M_1)_{A_1,B_1}$ before the special step of Step 1 and after the first loop of Step 2. We see that $g_\text{before}$ and $g_\text{after}$ may only differ by sign, since these operations are equivalent to first removing a column containing a single 1 and the row containing this 1, adding a fresh row and column containing a (same) new 1, and finally also possibly adding $x_\ell(w)$ times the column containing the new 1 to another column. The rest of the argument is the same as in Case (a). At the end of the algorithm we set $A':=A_1$ and $B'=B_1$. Example \ref{ex:Case(c)2} shows an instance of this case.
\end{proof}

\begin{example}
  \label{ex:Case(a)} (Example computation for Case (\ref{thm:casea})) Let $w=[9,-7,4]$ and $v=[0,5,1]$.   We compute
  \[
    Q(w)=s_2s_1s_0s_2s_1s_2s_0s_1s_2s_0s_1.
  \]
  Then $(1,2)\in \ess(v)$. We compute $l_{ws_\sigma}(1,2)=7$ and $l_w(1,2)=8$ (see Figure \ref{fig:Ex(a)}). We also compute $n_v(1,2,7)=5$ and $n_v(1,2,8)=6$. Consider $g=\det M_{A,B}$, where $A=\{ -10,-7,-4,-2,-1,1 \},\text{ and }B=\{ -4,-3,-1,0,1,2 \}$. As we remarked, we can think of $g$ instead as a $7\times 7$ minor in $M_{(-\infty,1],[-5,2]}$ by including row $-13$ and column $-5$, so our choices are
  \[
    A=\{ -13,-10,-7,-4,-2,-1,1 \},\text{ and }B=\{ -5,-4,-3,-1,0,1,2 \}.
  \]
  We initialize $B_1:=B$ and follow the algorithm. Note that we have $\sigma=1$.
  \begin{itemize}
  \item[Step 1.] There are no columns in $B$ relevant to Step 1.
  \item[Step 2.] Since $-1\in [\sigma+1]_n$, but $-2\not\in B$, we have to consider $b=-1$ in Step 2. Since $ws_\sigma(-2)=-10\in A$, we are in the second subcase. The leading term of $\det (M_1)_{A,B}$ is $x_{10}x_9x_6x_4^2$, and the contribution from row $-10$ is the $x_{10}$ in column $c=-3$. We replace $-3$  with $-2$ in $B_1$ and perform $\bsswap(-2)$.
  \item[Step 3.] We perform $\bsswap(-5)$ and $\bsswap(1)$ and we are done, as the other $\bsswap$-s do not affect the submatrix.
  \end{itemize}
  After the algorithm we have
  \[
    A^\prime=\{ -13,-10,-7,-4,-2,-1,1 \},\text{ and }B^\prime=\{ -5,-4,-2,-1,0,1,2 \}.
  \]
  We have $\LT(\det(M^\prime)_{A^\prime,B^\prime})=-x_9x_6x_4^2$ which indeed divides $\LT(\det (M)_{A,B})=x_{10}x_9x_6x_4^2$.
  \begin{figure}[h!]
    \centering
    \begin{tikzpicture}[scale=0.8,transform shape]
      \pgfmathsetmacro{\wid}{0.5}
      \pgfmathsetmacro{\al}{0.4}
      \draw[LightSkyBlue1, step=\wid] (0,0) grid (4,10);
      
      \foreach  \j  in {1,...,8} {
        \pgfmathtruncatemacro{\x}{\j-6}%
        \node[font=\tiny] at (\j*\wid-0.5*\wid,20.2*\wid) {$\x$};
      };

      \foreach  \j  in {1,...,20} {
        \pgfmathtruncatemacro{\x}{7-\j}%
        \node[font=\tiny] at (-0.3,\j*\wid-0.5*\wid) {$\x$};
      };

      \draw[blue!20, line width=2.5mm, opacity=\al] (0.5*\wid, 5*\wid) -- (0.5*\wid, 20*\wid);
      \draw[blue!20, line width=2.5mm, opacity=\al] (1.5*\wid, 5*\wid) -- (1.5*\wid, 20*\wid);
      \draw[blue!20, line width=2.5mm, opacity=\al] (2.5*\wid, 5*\wid) -- (2.5*\wid, 20*\wid);
      \draw[blue!20, line width=2.5mm, opacity=\al] (4.5*\wid, 5*\wid) -- (4.5*\wid, 20*\wid);
      \draw[blue!20, line width=2.5mm, opacity=\al] (5.5*\wid, 5*\wid) -- (5.5*\wid, 20*\wid);
      \draw[blue!20, line width=2.5mm, opacity=\al] (6.5*\wid, 5*\wid) -- (6.5*\wid, 20*\wid);
      \draw[blue!20, line width=2.5mm, opacity=\al] (7.5*\wid, 5*\wid) -- (7.5*\wid, 20*\wid);
      
      \draw[red!20, line width=2.5mm, opacity=\al] (1*\wid, 5.5*\wid) -- (8*\wid, 5.5*\wid);
      \draw[red!20, line width=2.5mm, opacity=\al] (1*\wid, 7.5*\wid) -- (8*\wid, 7.5*\wid);
      \draw[red!20, line width=2.5mm, opacity=\al] (1*\wid, 8.5*\wid) -- (8*\wid, 8.5*\wid);
      \draw[red!20, line width=2.5mm, opacity=\al] (1*\wid, 10.5*\wid) -- (8*\wid, 10.5*\wid);
      \draw[red!20, line width=2.5mm, opacity=\al] (1*\wid, 13.5*\wid) -- (8*\wid, 13.5*\wid);
      \draw[red!20, line width=2.5mm, opacity=\al] (1*\wid, 16.5*\wid) -- (8*\wid, 16.5*\wid);
      \draw[red!20, line width=2.5mm, opacity=\al] (0*\wid, 19.5*\wid) -- (8*\wid, 19.5*\wid);
      
      \foreach \j in {0,...,2} {
        \pgfmathsetmacro{\x}{((\j*3))*\wid+0.5*\wid}%
        \pgfmathsetmacro{\y}{(-(\j*3)+19)*\wid+0.5*\wid}%
        \node[] at (\x,\y) {$1$};
      }
      
      \foreach \j in {0,...,2} {
        \pgfmathsetmacro{\x}{((\j*3)+1)*\wid+0.5*\wid}%
        \pgfmathsetmacro{\y}{(-(\j*3)+16)*\wid+0.5*\wid}%
        \node[] at (\x,\y) {$x_9$};
      }
      
      \foreach \j in {0,...,1} {
        \pgfmathsetmacro{\x}{((\j*3)+2)*\wid+0.5*\wid}%
        \pgfmathsetmacro{\y}{(-(\j*3)+16)*\wid+0.5*\wid}%
        \node[] at (\x,\y) {$x_{10}$};
      }
      
      \foreach \j in {0,...,2} {
        \pgfmathsetmacro{\x}{((\j*3)+1)*\wid+0.5*\wid}%
        \pgfmathsetmacro{\y}{(-(\j*3)+13)*\wid+0.5*\wid}%
        \node[] at (\x,\y) {$x_7$};
      }
      
      \foreach \j in {0,...,1} {
        \pgfmathsetmacro{\x}{((\j*3)+2)*\wid+0.5*\wid}%
        \pgfmathsetmacro{\y}{(-(\j*3)+13)*\wid+0.5*\wid}%
        \node[] at (\x,\y) {$x_8$};
      }
      
      \foreach \j in {0,...,2} {
        \pgfmathsetmacro{\x}{((\j*3)+1)*\wid+0.5*\wid}%
        \pgfmathsetmacro{\y}{(-(\j*3)+10)*\wid+0.5*\wid}%
        \node[] at (\x,\y) {$x_5$};
      }
      
      \foreach \j in {0,...,1} {
        \pgfmathsetmacro{\x}{((\j*3)+2)*\wid+0.5*\wid}%
        \pgfmathsetmacro{\y}{(-(\j*3)+10)*\wid+0.5*\wid}%
        \node[] at (\x,\y) {$x_6$};
      }
      
      \foreach \j in {0,...,2} {
        \pgfmathsetmacro{\x}{((\j*3)+1)*\wid+0.5*\wid}%
        \pgfmathsetmacro{\y}{(-(\j*3)+8)*\wid+0.5*\wid}%
        \node[] at (\x,\y) {$x_4$};
        \node[] at (\x+\wid,\y) {$1$};
        \node[] at (\x,\y-\wid) {$x_3$};
      }
      
      \foreach \j in {0,...,1} {
        \pgfmathsetmacro{\x}{((\j*3)+1)*\wid+0.5*\wid}%
        \pgfmathsetmacro{\y}{(-(\j*3)+5)*\wid+0.5*\wid}%
        \node[] at (\x,\y) {$x_2$};
        \node[] at (\x,\y-\wid) {$x_1$};
        \node[] at (\x,\y-2*\wid) {1};
      }
      \draw[blue, thick] (1*\wid, 5*\wid) -- (8*\wid, 5*\wid);
      \draw[blue, thick] (1*\wid, 5*\wid) -- (1*\wid, 20*\wid);
      \draw[blue, thick] (8*\wid, 5*\wid) -- (8*\wid, 20*\wid);
      \begin{scope}[xshift=7cm]
        \draw[LightSkyBlue1, step=\wid] (0,0) grid (4,10);
        
        \foreach  \j  in {1,...,8} {
          \pgfmathtruncatemacro{\x}{\j-6}%
          \node[font=\tiny] at (\j*\wid-0.5*\wid,20.2*\wid) {$\x$};
        };

        \foreach  \j  in {1,...,20} {
          \pgfmathtruncatemacro{\x}{7-\j}%
          \node[font=\tiny] at (-0.3,\j*\wid-0.5*\wid) {$\x$};
        };
        \draw[blue!20, line width=2.5mm, opacity=\al] (0.5*\wid, 5*\wid) -- (0.5*\wid, 20*\wid);
        \draw[blue!20, line width=2.5mm, opacity=\al] (1.5*\wid, 5*\wid) -- (1.5*\wid, 20*\wid);
        \draw[blue!20, line width=2.5mm, opacity=\al] (3.5*\wid, 5*\wid) -- (3.5*\wid, 20*\wid);
        \draw[blue!20, line width=2.5mm, opacity=\al] (4.5*\wid, 5*\wid) -- (4.5*\wid, 20*\wid);
        \draw[blue!20, line width=2.5mm, opacity=\al] (5.5*\wid, 5*\wid) -- (5.5*\wid, 20*\wid);
        \draw[blue!20, line width=2.5mm, opacity=\al] (6.5*\wid, 5*\wid) -- (6.5*\wid, 20*\wid);
        \draw[blue!20, line width=2.5mm, opacity=\al] (7.5*\wid, 5*\wid) -- (7.5*\wid, 20*\wid);
        
        \draw[red!20, line width=2.5mm, opacity=\al] (0*\wid, 5.5*\wid) -- (8*\wid, 5.5*\wid);
        \draw[red!20, line width=2.5mm, opacity=\al] (0*\wid, 7.5*\wid) -- (8*\wid, 7.5*\wid);
        \draw[red!20, line width=2.5mm, opacity=\al] (0*\wid, 8.5*\wid) -- (8*\wid, 8.5*\wid);
        \draw[red!20, line width=2.5mm, opacity=\al] (0*\wid, 10.5*\wid) -- (8*\wid, 10.5*\wid);
        \draw[red!20, line width=2.5mm, opacity=\al] (0*\wid, 13.5*\wid) -- (8*\wid, 13.5*\wid);
        \draw[red!20, line width=2.5mm, opacity=\al] (0*\wid, 16.5*\wid) -- (8*\wid, 16.5*\wid);
        \draw[red!20, line width=2.5mm, opacity=\al] (0*\wid, 19.5*\wid) -- (8*\wid, 19.5*\wid);
        
        \foreach \j in {0,...,2} {
          \pgfmathsetmacro{\x}{((\j*3)+1)*\wid+0.5*\wid}%
          \pgfmathsetmacro{\y}{(-(\j*3)+19)*\wid+0.5*\wid}%
          \node[] at (\x,\y) {$1$};
          \node[] at (\x-\wid, \y){$x_{11}$};
        }
        
        \foreach \j in {0,...,2} {
          \pgfmathsetmacro{\x}{((\j*3))*\wid+0.5*\wid}%
          \pgfmathsetmacro{\y}{(-(\j*3)+16)*\wid+0.5*\wid}%
          \node[] at (\x,\y) {$x_9$};
        }
        
        \foreach \j in {0,...,1} {
          \pgfmathsetmacro{\x}{((\j*3)+2)*\wid+0.5*\wid}%
          \pgfmathsetmacro{\y}{(-(\j*3)+16)*\wid+0.5*\wid}%
          \node[] at (\x,\y) {$x_{10}$};
        }
        
        \foreach \j in {0,...,2} {
          \pgfmathsetmacro{\x}{((\j*3))*\wid+0.5*\wid}%
          \pgfmathsetmacro{\y}{(-(\j*3)+13)*\wid+0.5*\wid}%
          \node[] at (\x,\y) {$x_7$};
        }
        
        \foreach \j in {0,...,1} {
          \pgfmathsetmacro{\x}{((\j*3)+2)*\wid+0.5*\wid}%
          \pgfmathsetmacro{\y}{(-(\j*3)+13)*\wid+0.5*\wid}%
          \node[] at (\x,\y) {$x_8$};
        }
        
        \foreach \j in {0,...,2} {
          \pgfmathsetmacro{\x}{((\j*3))*\wid+0.5*\wid}%
          \pgfmathsetmacro{\y}{(-(\j*3)+10)*\wid+0.5*\wid}%
          \node[] at (\x,\y) {$x_5$};
        }
        
        \foreach \j in {0,...,1} {
          \pgfmathsetmacro{\x}{((\j*3)+2)*\wid+0.5*\wid}%
          \pgfmathsetmacro{\y}{(-(\j*3)+10)*\wid+0.5*\wid}%
          \node[] at (\x,\y) {$x_6$};
        }
        
        \foreach \j in {0,...,2} {
          \pgfmathsetmacro{\x}{((\j*3))*\wid+0.5*\wid}%
          \pgfmathsetmacro{\y}{(-(\j*3)+8)*\wid+0.5*\wid}%
          \node[] at (\x,\y) {$x_4$};
          \node[] at (\x,\y-\wid) {$x_3$};
        }
        
        \foreach \j in {0,...,1} {
          \pgfmathsetmacro{\x}{((\j*3)+1)*\wid+0.5*\wid}%
          \pgfmathsetmacro{\y}{(-(\j*3)+8)*\wid+0.5*\wid}%
          \node[] at (\x+\wid,\y) {$1$};
        }
        
        \foreach \j in {0,...,1} {
          \pgfmathsetmacro{\x}{((\j*3))*\wid+0.5*\wid}%
          \pgfmathsetmacro{\y}{(-(\j*3)+5)*\wid+0.5*\wid}%
          \node[] at (\x,\y) {$x_2$};
          \node[] at (\x,\y-\wid) {$x_1$};
          \node[] at (\x,\y-2*\wid) {1};
        }
        \draw[blue, thick] (0*\wid, 5*\wid) -- (8*\wid, 5*\wid);
        \draw[blue, thick] (0*\wid, 5*\wid) -- (0*\wid, 20*\wid);
        \draw[blue, thick] (8*\wid, 5*\wid) -- (8*\wid, 20*\wid);
      \end{scope}
    \end{tikzpicture}

    \caption{Example computation for Case (a)}
    \label{fig:Ex(a)}
  \end{figure}
\end{example}

\begin{example}
  \label{ex:Case(b)}
  (Example computation for Case (\ref{thm:caseb})) Let $w=[9,-7,4]$ and $v=[5,0,1]$. We compute \[Q(w)=s_2s_1s_0s_2s_1s_2s_0s_1s_2s_0s_1.\]
  We take $(i,j)=(1,1)\in\ess(v)$. We compute $l_{ws_\sigma}(1,1)=6$ and $l_w(1,1)=7$ (see Figure \ref{fig:Ex(b)}). We also compute $n_v(1,1,6)=4$ and $n_v(1,1,7)=5$. Consider $g=\det M_{A,B}$ where $A=\{ -10,-7,-4,-1,1 \}$ and $B=\{ -4,-3,-2,0,1 \}.$
  As in Example \ref{ex:Case(a)}, we may interpret $g$ as a $6\times 6$ minor in $M_{(-\infty, 1],[-5,1]}$ by including row $-13$ and column $-5$.
  \[
    A=\{ -13,-10,-7,-4,-1,1 \}\text{ and }B=\{ -5,-4,-3,-2,0,1 \}.
  \]
  We follow the algorithm; note that $\sigma=1$.
  \begin{itemize}
  \item[Step 1.] We replace $-2$ by $-1$, and perform $\bsswap(-2)$. Note that when we get to column $1$ (which is in $B$), we do nothing.
  \item[Step 2.] There is nothing to do in this step.
  \item[Step 3.] We perform $\bsswap(-5)$ and $\bsswap(1)$ and we are done, since the remaining $\bsswap$-s do not affect the submatrix. 
  \end{itemize}
  After the algorithm, we have
  \[
    A^\prime=\{ -13,-10,-7,-4,-1,1 \}\text{ and }B^\prime=\{ -5,-4,-3,-1,0,1 \}.
  \]
  We have $\LT(\det(M^\prime)_{A^\prime,B^\prime})=x_{11}x_6^2x_2$, which indeed divides $(x_{11})\LT(\det(M)_{A,B})=-x_{11}x_6^2x_2$.
    \begin{figure}[h!]
    \centering
    \begin{tikzpicture}[scale=0.8,transform shape]
      \pgfmathsetmacro{\wid}{0.5}
      \pgfmathsetmacro{\al}{0.4}
      \draw[LightSkyBlue1, step=\wid] (0,0) grid (4,10);
      
      \foreach  \j  in {1,...,8} {
        \pgfmathtruncatemacro{\x}{\j-6}%
        \node[font=\tiny] at (\j*\wid-0.5*\wid,20.2*\wid) {$\x$};
      };

      \foreach  \j  in {1,...,20} {
        \pgfmathtruncatemacro{\x}{7-\j}%
        \node[font=\tiny] at (-0.3,\j*\wid-0.5*\wid) {$\x$};
      };

      \draw[blue!20, line width=2.5mm, opacity=\al] (0.5*\wid, 5*\wid) -- (0.5*\wid, 20*\wid);
      \draw[blue!20, line width=2.5mm, opacity=\al] (1.5*\wid, 5*\wid) -- (1.5*\wid, 20*\wid);
      \draw[blue!20, line width=2.5mm, opacity=\al] (2.5*\wid, 5*\wid) -- (2.5*\wid, 20*\wid);
      \draw[blue!20, line width=2.5mm, opacity=\al] (3.5*\wid, 5*\wid) -- (3.5*\wid, 20*\wid);
      \draw[blue!20, line width=2.5mm, opacity=\al] (5.5*\wid, 5*\wid) -- (5.5*\wid, 20*\wid);
      \draw[blue!20, line width=2.5mm, opacity=\al] (6.5*\wid, 5*\wid) -- (6.5*\wid, 20*\wid);
      
      \draw[red!20, line width=2.5mm, opacity=\al] (0*\wid, 5.5*\wid) -- (7*\wid, 5.5*\wid);
      \draw[red!20, line width=2.5mm, opacity=\al] (0*\wid, 7.5*\wid) -- (7*\wid, 7.5*\wid);
      \draw[red!20, line width=2.5mm, opacity=\al] (0*\wid, 10.5*\wid) -- (7*\wid, 10.5*\wid);
      \draw[red!20, line width=2.5mm, opacity=\al] (0*\wid, 13.5*\wid) -- (7*\wid, 13.5*\wid);
      \draw[red!20, line width=2.5mm, opacity=\al] (0*\wid, 16.5*\wid) -- (7*\wid, 16.5*\wid);
      \draw[red!20, line width=2.5mm, opacity=\al] (0*\wid, 19.5*\wid) -- (7*\wid, 19.5*\wid);
      
      \foreach \j in {0,...,2} {
        \pgfmathsetmacro{\x}{((\j*3))*\wid+0.5*\wid}%
        \pgfmathsetmacro{\y}{(-(\j*3)+19)*\wid+0.5*\wid}%
        \node[] at (\x,\y) {$1$};
      }
      
      \foreach \j in {0,...,2} {
        \pgfmathsetmacro{\x}{((\j*3)+1)*\wid+0.5*\wid}%
        \pgfmathsetmacro{\y}{(-(\j*3)+16)*\wid+0.5*\wid}%
        \node[] at (\x,\y) {$x_9$};
      }
      
      \foreach \j in {0,...,1} {
        \pgfmathsetmacro{\x}{((\j*3)+2)*\wid+0.5*\wid}%
        \pgfmathsetmacro{\y}{(-(\j*3)+16)*\wid+0.5*\wid}%
        \node[] at (\x,\y) {$x_{10}$};
      }
      
      \foreach \j in {0,...,2} {
        \pgfmathsetmacro{\x}{((\j*3)+1)*\wid+0.5*\wid}%
        \pgfmathsetmacro{\y}{(-(\j*3)+13)*\wid+0.5*\wid}%
        \node[] at (\x,\y) {$x_7$};
      }
      
      \foreach \j in {0,...,1} {
        \pgfmathsetmacro{\x}{((\j*3)+2)*\wid+0.5*\wid}%
        \pgfmathsetmacro{\y}{(-(\j*3)+13)*\wid+0.5*\wid}%
        \node[] at (\x,\y) {$x_8$};
      }
      
      \foreach \j in {0,...,2} {
        \pgfmathsetmacro{\x}{((\j*3)+1)*\wid+0.5*\wid}%
        \pgfmathsetmacro{\y}{(-(\j*3)+10)*\wid+0.5*\wid}%
        \node[] at (\x,\y) {$x_5$};
      }
      
      \foreach \j in {0,...,1} {
        \pgfmathsetmacro{\x}{((\j*3)+2)*\wid+0.5*\wid}%
        \pgfmathsetmacro{\y}{(-(\j*3)+10)*\wid+0.5*\wid}%
        \node[] at (\x,\y) {$x_6$};
      }
      
      \foreach \j in {0,...,2} {
        \pgfmathsetmacro{\x}{((\j*3)+1)*\wid+0.5*\wid}%
        \pgfmathsetmacro{\y}{(-(\j*3)+8)*\wid+0.5*\wid}%
        \node[] at (\x,\y) {$x_4$};
        \node[] at (\x+\wid,\y) {$1$};
        \node[] at (\x,\y-\wid) {$x_3$};
      }
      
      \foreach \j in {0,...,1} {
        \pgfmathsetmacro{\x}{((\j*3)+1)*\wid+0.5*\wid}%
        \pgfmathsetmacro{\y}{(-(\j*3)+5)*\wid+0.5*\wid}%
        \node[] at (\x,\y) {$x_2$};
        \node[] at (\x,\y-\wid) {$x_1$};
        \node[] at (\x,\y-2*\wid) {1};
      }
      \draw[blue, thick] (1*\wid, 5*\wid) -- (7*\wid, 5*\wid);
      \draw[blue, thick] (1*\wid, 5*\wid) -- (1*\wid, 20*\wid);
      \draw[blue, thick] (7*\wid, 5*\wid) -- (7*\wid, 20*\wid);
      \begin{scope}[xshift=7cm]
        \draw[LightSkyBlue1, step=\wid] (0,0) grid (4,10);
        
        \foreach  \j  in {1,...,8} {
          \pgfmathtruncatemacro{\x}{\j-6}%
          \node[font=\tiny] at (\j*\wid-0.5*\wid,20.2*\wid) {$\x$};
        };

        \foreach  \j  in {1,...,20} {
          \pgfmathtruncatemacro{\x}{7-\j}%
          \node[font=\tiny] at (-0.3,\j*\wid-0.5*\wid) {$\x$};
        };
        \draw[blue!20, line width=2.5mm, opacity=\al] (0.5*\wid, 5*\wid) -- (0.5*\wid, 20*\wid);
        \draw[blue!20, line width=2.5mm, opacity=\al] (1.5*\wid, 5*\wid) -- (1.5*\wid, 20*\wid);
        \draw[blue!20, line width=2.5mm, opacity=\al] (2.5*\wid, 5*\wid) -- (2.5*\wid, 20*\wid);
        \draw[blue!20, line width=2.5mm, opacity=\al] (4.5*\wid, 5*\wid) -- (4.5*\wid, 20*\wid);
        \draw[blue!20, line width=2.5mm, opacity=\al] (5.5*\wid, 5*\wid) -- (5.5*\wid, 20*\wid);
        \draw[blue!20, line width=2.5mm, opacity=\al] (6.5*\wid, 5*\wid) -- (6.5*\wid, 20*\wid);
        
        \draw[red!20, line width=2.5mm, opacity=\al] (0*\wid, 5.5*\wid) -- (7*\wid, 5.5*\wid);
        \draw[red!20, line width=2.5mm, opacity=\al] (0*\wid, 7.5*\wid) -- (7*\wid, 7.5*\wid);
        \draw[red!20, line width=2.5mm, opacity=\al] (0*\wid, 10.5*\wid) -- (7*\wid, 10.5*\wid);
        \draw[red!20, line width=2.5mm, opacity=\al] (0*\wid, 13.5*\wid) -- (7*\wid, 13.5*\wid);
        \draw[red!20, line width=2.5mm, opacity=\al] (0*\wid, 16.5*\wid) -- (7*\wid, 16.5*\wid);
        \draw[red!20, line width=2.5mm, opacity=\al] (0*\wid, 19.5*\wid) -- (7*\wid, 19.5*\wid);
        
        \foreach \j in {0,...,2} {
          \pgfmathsetmacro{\x}{((\j*3)+1)*\wid+0.5*\wid}%
          \pgfmathsetmacro{\y}{(-(\j*3)+19)*\wid+0.5*\wid}%
          \node[] at (\x,\y) {$1$};
          \node[] at (\x-\wid, \y){$x_{11}$};
        }
        
        \foreach \j in {0,...,2} {
          \pgfmathsetmacro{\x}{((\j*3))*\wid+0.5*\wid}%
          \pgfmathsetmacro{\y}{(-(\j*3)+16)*\wid+0.5*\wid}%
          \node[] at (\x,\y) {$x_9$};
        }
        
        \foreach \j in {0,...,1} {
          \pgfmathsetmacro{\x}{((\j*3)+2)*\wid+0.5*\wid}%
          \pgfmathsetmacro{\y}{(-(\j*3)+16)*\wid+0.5*\wid}%
          \node[] at (\x,\y) {$x_{10}$};
        }
        
        \foreach \j in {0,...,2} {
          \pgfmathsetmacro{\x}{((\j*3))*\wid+0.5*\wid}%
          \pgfmathsetmacro{\y}{(-(\j*3)+13)*\wid+0.5*\wid}%
          \node[] at (\x,\y) {$x_7$};
        }
        
        \foreach \j in {0,...,1} {
          \pgfmathsetmacro{\x}{((\j*3)+2)*\wid+0.5*\wid}%
          \pgfmathsetmacro{\y}{(-(\j*3)+13)*\wid+0.5*\wid}%
          \node[] at (\x,\y) {$x_8$};
        }
        
        \foreach \j in {0,...,2} {
          \pgfmathsetmacro{\x}{((\j*3))*\wid+0.5*\wid}%
          \pgfmathsetmacro{\y}{(-(\j*3)+10)*\wid+0.5*\wid}%
          \node[] at (\x,\y) {$x_5$};
        }
        
        \foreach \j in {0,...,1} {
          \pgfmathsetmacro{\x}{((\j*3)+2)*\wid+0.5*\wid}%
          \pgfmathsetmacro{\y}{(-(\j*3)+10)*\wid+0.5*\wid}%
          \node[] at (\x,\y) {$x_6$};
        }
        
        \foreach \j in {0,...,2} {
          \pgfmathsetmacro{\x}{((\j*3))*\wid+0.5*\wid}%
          \pgfmathsetmacro{\y}{(-(\j*3)+8)*\wid+0.5*\wid}%
          \node[] at (\x,\y) {$x_4$};
          \node[] at (\x,\y-\wid) {$x_3$};
        }
        
        \foreach \j in {0,...,1} {
          \pgfmathsetmacro{\x}{((\j*3)+1)*\wid+0.5*\wid}%
          \pgfmathsetmacro{\y}{(-(\j*3)+8)*\wid+0.5*\wid}%
          \node[] at (\x+\wid,\y) {$1$};
        }
        
        \foreach \j in {0,...,1} {
          \pgfmathsetmacro{\x}{((\j*3))*\wid+0.5*\wid}%
          \pgfmathsetmacro{\y}{(-(\j*3)+5)*\wid+0.5*\wid}%
          \node[] at (\x,\y) {$x_2$};
          \node[] at (\x,\y-\wid) {$x_1$};
          \node[] at (\x,\y-2*\wid) {1};
        }
        \draw[blue, thick] (0*\wid, 5*\wid) -- (7*\wid, 5*\wid);
        \draw[blue, thick] (0*\wid, 5*\wid) -- (0*\wid, 20*\wid);
        \draw[blue, thick] (7*\wid, 5*\wid) -- (7*\wid, 20*\wid);
      \end{scope}
    \end{tikzpicture}

    \caption{Example computation for Case (b)}
    \label{fig:Ex(b)}
  \end{figure}
\end{example}

\begin{example}
  \label{ex:Case(c)2}
  (Example computation for Case (\ref{thm:casec})) Let $w=[9,-7,4]$ and $v=[5,0,1]$. We compute \[Q(w)=s_2s_1s_0s_2s_1s_2s_0s_1s_2s_0s_1.\] Then $vs_1=[0,5,1]$. We take $(i,j):=(1,2)\in\ess(vs_1)$, and $(i,j-1)=(1,1)\in \ess(v)$. Then $l_{ws_\sigma}(1,2)=l_w(1,1)=7$, and $n_{vs_\sigma}(1,2,7)=n_v(1,1,7)=5$, so $EQ_{ws_1,vs_1}^{1,2}$ consists of 6 by 6 minors of $M_{(-\infty, 1],[-4,2]}$, and $EQ_{w,v}^{1,1}$ consists of 6 by 6 minors of $M'_{(-\infty, 1],[-5,1]}$ as shown in Figure~\ref{fig:Ex(c)2}.
  Consider $g:=\det M_{A,B}$, where
  \[
    A=\{-10,-7,-4,-2,-1,1\} \text{ and }  B=\{-4,-3,-2,-1,0,1\}.
  \]
  We follow the algorithm with $\sigma=1$. We are in the case where $(i,j-1)\in\ess(v)$, and in the subcase where $j-l_{ws_\sigma}(i,j)=-5\in [\sigma]_n$.
  \begin{itemize}
  \item[Step 1.] There is nothing to do until we get to the special step, since $j-1=1\in B$ but $2\not\in B$.  We replace $1$ by $1-7+1=-5$ and we replace $ws_\sigma(1)=-7$ in $A$ by $ws_\sigma(-5)=-13$, and we perform $\bsswap(1)$. 
  \item[Step 3.] We perform $\bsswap(-5)$ and $\bsswap(-2)$ and we are done as the other $\bsswap$-s do not affect the submatrix.
  \end{itemize}
  After the algorithm we have
  \[
    A'=\{-13,-10,-4,-2,-1,1\} \text{ and }  B'=\{-5,-4,-3,-2,-1,0\}.
  \]
  Note that $\LT(\det(M^\prime)_{A',B'})=-x_8x_5x_2=\LT(\det(M)_{A,B})$.

  \begin{figure}[h!]
    \centering
    \begin{tikzpicture}[scale=0.8,transform shape]
      \pgfmathsetmacro{\wid}{0.5}
      \pgfmathsetmacro{\al}{0.4}
      \draw[LightSkyBlue1, step=\wid] (0,0) grid (4,10);
      
      \foreach  \j  in {1,...,8} {
        \pgfmathtruncatemacro{\x}{\j-6}%
        \node[font=\tiny] at (\j*\wid-0.5*\wid,20.2*\wid) {$\x$};
      };

      \foreach  \j  in {1,...,20} {
        \pgfmathtruncatemacro{\x}{7-\j}%
        \node[font=\tiny] at (-0.3,\j*\wid-0.5*\wid) {$\x$};
      };
      
      \draw[blue!20, line width=2.5mm, opacity=\al] (1.5*\wid, 5*\wid) -- (1.5*\wid, 20*\wid);
      \draw[blue!20, line width=2.5mm, opacity=\al] (2.5*\wid, 5*\wid) -- (2.5*\wid, 20*\wid);
      \draw[blue!20, line width=2.5mm, opacity=\al] (3.5*\wid, 5*\wid) -- (3.5*\wid, 20*\wid);
      \draw[blue!20, line width=2.5mm, opacity=\al] (4.5*\wid, 5*\wid) -- (4.5*\wid, 20*\wid);
      \draw[blue!20, line width=2.5mm, opacity=\al] (5.5*\wid, 5*\wid) -- (5.5*\wid, 20*\wid);
      \draw[blue!20, line width=2.5mm, opacity=\al] (6.5*\wid, 5*\wid) -- (6.5*\wid, 20*\wid);
      
      \draw[red!20, line width=2.5mm, opacity=\al] (1*\wid, 5.5*\wid) -- (8*\wid, 5.5*\wid);
      \draw[red!20, line width=2.5mm, opacity=\al] (1*\wid, 7.5*\wid) -- (8*\wid, 7.5*\wid);
      \draw[red!20, line width=2.5mm, opacity=\al] (1*\wid, 8.5*\wid) -- (8*\wid, 8.5*\wid);
      \draw[red!20, line width=2.5mm, opacity=\al] (1*\wid, 10.5*\wid) -- (8*\wid, 10.5*\wid);
      \draw[red!20, line width=2.5mm, opacity=\al] (1*\wid, 13.5*\wid) -- (8*\wid, 13.5*\wid);
      \draw[red!20, line width=2.5mm, opacity=\al] (1*\wid, 16.5*\wid) -- (8*\wid, 16.5*\wid);
      \foreach \j in {0,...,2} {
        \pgfmathsetmacro{\x}{((\j*3))*\wid+0.5*\wid}%
        \pgfmathsetmacro{\y}{(-(\j*3)+19)*\wid+0.5*\wid}%
        \node[] at (\x,\y) {$1$};
      }
      
      \foreach \j in {0,...,2} {
        \pgfmathsetmacro{\x}{((\j*3)+1)*\wid+0.5*\wid}%
        \pgfmathsetmacro{\y}{(-(\j*3)+16)*\wid+0.5*\wid}%
        \node[] at (\x,\y) {$x_9$};
      }
      
      \foreach \j in {0,...,1} {
        \pgfmathsetmacro{\x}{((\j*3)+2)*\wid+0.5*\wid}%
        \pgfmathsetmacro{\y}{(-(\j*3)+16)*\wid+0.5*\wid}%
        \node[] at (\x,\y) {$x_{10}$};
      }
      
      \foreach \j in {0,...,2} {
        \pgfmathsetmacro{\x}{((\j*3)+1)*\wid+0.5*\wid}%
        \pgfmathsetmacro{\y}{(-(\j*3)+13)*\wid+0.5*\wid}%
        \node[] at (\x,\y) {$x_7$};
      }
      
      \foreach \j in {0,...,1} {
        \pgfmathsetmacro{\x}{((\j*3)+2)*\wid+0.5*\wid}%
        \pgfmathsetmacro{\y}{(-(\j*3)+13)*\wid+0.5*\wid}%
        \node[] at (\x,\y) {$x_8$};
      }
      
      \foreach \j in {0,...,2} {
        \pgfmathsetmacro{\x}{((\j*3)+1)*\wid+0.5*\wid}%
        \pgfmathsetmacro{\y}{(-(\j*3)+10)*\wid+0.5*\wid}%
        \node[] at (\x,\y) {$x_5$};
      }
      
      \foreach \j in {0,...,1} {
        \pgfmathsetmacro{\x}{((\j*3)+2)*\wid+0.5*\wid}%
        \pgfmathsetmacro{\y}{(-(\j*3)+10)*\wid+0.5*\wid}%
        \node[] at (\x,\y) {$x_6$};
      }
      
      \foreach \j in {0,...,2} {
        \pgfmathsetmacro{\x}{((\j*3)+1)*\wid+0.5*\wid}%
        \pgfmathsetmacro{\y}{(-(\j*3)+8)*\wid+0.5*\wid}%
        \node[] at (\x,\y) {$x_4$};
        \node[] at (\x+\wid,\y) {$1$};
        \node[] at (\x,\y-\wid) {$x_3$};
      }
      
      \foreach \j in {0,...,1} {
        \pgfmathsetmacro{\x}{((\j*3)+1)*\wid+0.5*\wid}%
        \pgfmathsetmacro{\y}{(-(\j*3)+5)*\wid+0.5*\wid}%
        \node[] at (\x,\y) {$x_2$};
        \node[] at (\x,\y-\wid) {$x_1$};
        \node[] at (\x,\y-2*\wid) {1};
      }
      \draw[blue, thick] (1*\wid, 5*\wid) -- (8*\wid, 5*\wid);
      \draw[blue, thick] (1*\wid, 5*\wid) -- (1*\wid, 20*\wid);
      \draw[blue, thick] (8*\wid, 5*\wid) -- (8*\wid, 20*\wid);
      
      \begin{scope}[xshift=7cm]
        \draw[LightSkyBlue1, step=\wid] (0,0) grid (4,10);
        
        \foreach  \j  in {1,...,8} {
          \pgfmathtruncatemacro{\x}{\j-6}%
          \node[font=\tiny] at (\j*\wid-0.5*\wid,20.2*\wid) {$\x$};
        };

        \foreach  \j  in {1,...,20} {
          \pgfmathtruncatemacro{\x}{7-\j}%
          \node[font=\tiny] at (-0.3,\j*\wid-0.5*\wid) {$\x$};
        };
        \draw[blue!20, line width=2.5mm, opacity=\al] (0.5*\wid, 5*\wid) -- (0.5*\wid, 20*\wid);
        \draw[blue!20, line width=2.5mm, opacity=\al] (1.5*\wid, 5*\wid) -- (1.5*\wid, 20*\wid);
        \draw[blue!20, line width=2.5mm, opacity=\al] (2.5*\wid, 5*\wid) -- (2.5*\wid, 20*\wid);
        \draw[blue!20, line width=2.5mm, opacity=\al] (3.5*\wid, 5*\wid) -- (3.5*\wid, 20*\wid);
        \draw[blue!20, line width=2.5mm, opacity=\al] (4.5*\wid, 5*\wid) -- (4.5*\wid, 20*\wid);
        \draw[blue!20, line width=2.5mm, opacity=\al] (5.5*\wid, 5*\wid) -- (5.5*\wid, 20*\wid);

        \draw[red!20, line width=2.5mm, opacity=\al] (0*\wid, 5.5*\wid) -- (7*\wid, 5.5*\wid);
        \draw[red!20, line width=2.5mm, opacity=\al] (0*\wid, 7.5*\wid) -- (7*\wid, 7.5*\wid);
        \draw[red!20, line width=2.5mm, opacity=\al] (0*\wid, 8.5*\wid) -- (7*\wid, 8.5*\wid);
        \draw[red!20, line width=2.5mm, opacity=\al] (0*\wid, 10.5*\wid) -- (7*\wid, 10.5*\wid);
        \draw[red!20, line width=2.5mm, opacity=\al] (0*\wid, 19.5*\wid) -- (7*\wid, 19.5*\wid);
        \draw[red!20, line width=2.5mm, opacity=\al] (0*\wid, 16.5*\wid) -- (7*\wid, 16.5*\wid);
        
        \foreach \j in {0,...,2} {
          \pgfmathsetmacro{\x}{((\j*3)+1)*\wid+0.5*\wid}%
          \pgfmathsetmacro{\y}{(-(\j*3)+19)*\wid+0.5*\wid}%
          \node[] at (\x,\y) {$1$};
          \node[] at (\x-\wid, \y){$x_{11}$};
        }
        
        \foreach \j in {0,...,2} {
          \pgfmathsetmacro{\x}{((\j*3))*\wid+0.5*\wid}%
          \pgfmathsetmacro{\y}{(-(\j*3)+16)*\wid+0.5*\wid}%
          \node[] at (\x,\y) {$x_9$};
        }
        
        \foreach \j in {0,...,1} {
          \pgfmathsetmacro{\x}{((\j*3)+2)*\wid+0.5*\wid}%
          \pgfmathsetmacro{\y}{(-(\j*3)+16)*\wid+0.5*\wid}%
          \node[] at (\x,\y) {$x_{10}$};
        }
        
        \foreach \j in {0,...,2} {
          \pgfmathsetmacro{\x}{((\j*3))*\wid+0.5*\wid}%
          \pgfmathsetmacro{\y}{(-(\j*3)+13)*\wid+0.5*\wid}%
          \node[] at (\x,\y) {$x_7$};
        }
        
        \foreach \j in {0,...,1} {
          \pgfmathsetmacro{\x}{((\j*3)+2)*\wid+0.5*\wid}%
          \pgfmathsetmacro{\y}{(-(\j*3)+13)*\wid+0.5*\wid}%
          \node[] at (\x,\y) {$x_8$};
        }
        
        \foreach \j in {0,...,2} {
          \pgfmathsetmacro{\x}{((\j*3))*\wid+0.5*\wid}%
          \pgfmathsetmacro{\y}{(-(\j*3)+10)*\wid+0.5*\wid}%
          \node[] at (\x,\y) {$x_5$};
        }
        
        \foreach \j in {0,...,1} {
          \pgfmathsetmacro{\x}{((\j*3)+2)*\wid+0.5*\wid}%
          \pgfmathsetmacro{\y}{(-(\j*3)+10)*\wid+0.5*\wid}%
          \node[] at (\x,\y) {$x_6$};
        }
        
        \foreach \j in {0,...,2} {
          \pgfmathsetmacro{\x}{((\j*3))*\wid+0.5*\wid}%
          \pgfmathsetmacro{\y}{(-(\j*3)+8)*\wid+0.5*\wid}%
          \node[] at (\x,\y) {$x_4$};
          \node[] at (\x,\y-\wid) {$x_3$};
        }
        
        \foreach \j in {0,...,1} {
          \pgfmathsetmacro{\x}{((\j*3)+1)*\wid+0.5*\wid}%
          \pgfmathsetmacro{\y}{(-(\j*3)+8)*\wid+0.5*\wid}%
          \node[] at (\x+\wid,\y) {$1$};
        }
        
        \foreach \j in {0,...,1} {
          \pgfmathsetmacro{\x}{((\j*3))*\wid+0.5*\wid}%
          \pgfmathsetmacro{\y}{(-(\j*3)+5)*\wid+0.5*\wid}%
          \node[] at (\x,\y) {$x_2$};
          \node[] at (\x,\y-\wid) {$x_1$};
          \node[] at (\x,\y-2*\wid) {1};
        }
        \draw[blue, thick] (0*\wid, 5*\wid) -- (7*\wid, 5*\wid);
        \draw[blue, thick] (0*\wid, 5*\wid) -- (0*\wid, 20*\wid);
        \draw[blue, thick] (7*\wid, 5*\wid) -- (7*\wid, 20*\wid);
      \end{scope}
    \end{tikzpicture}

    \caption{Example computation for Case (c)}
    \label{fig:Ex(c)2}
  \end{figure}
\end{example}

\begin{proof}[Proof of Proposition~\ref{prop:SRinJ}]
 We proceed by induction on $\ell(w)$. The base case for $\ell(w)=0$ is trivial. Now assume $\ell(w)>0$. Suppose $s_\sigma$ is the last simple reflection of $Q(w)$, namely, $j=\sigma(w)$. 
 
 Suppose $P\subset \{1,2,\cdots,\ell(w)\}$ such that $P\not\in\Delta(Q(w),v)$. We  show that $\prod_{a\in P}x_a$ is divisible by  $\LT(g)$ for some $g\in EQ_{w,v}$. Let $q=\ell(w)$. 
 
 \textbf{Case $q\in P$.} By Lemma~\ref{lem:simplicial_induction}(a), $P\setminus\{q\}\not\in \link_q(\Delta(Q(w),v))$, and by Lemma~\ref{lem:subword_induction}, $\link_q(\Delta(Q(w),v))=\Delta(Q(ws_\sigma),v)$. By induction hypothesis,
 $\prod_{a\in P\setminus\{q\}} x_a$ is divisible by $\LT(g)$ for some $g\in EQ_{ws_\sigma,v}$.
 
 \textbf{Subcase $\ell(vs_\sigma>v)$.} By Lemma~\ref{lem:main}(a), there exists $g'\in EQ_{w,v}$ such that $\LT(g')$ divides $\LT(g)$, and hence $\LT(g')$ divides $\prod_{a\in P}x_a$.
 
 \textbf{Subcase $\ell(vs_\sigma>v)$.} By Lemma~\ref{lem:main}(b), there exists $g'\in EQ_{w,v}$ such that $\LT(g')$ divides $x_{q}\LT(g)$. Therefore $\LT(g')$ divides $x_q \prod_{a\in P\setminus\{q\}}x_a=\prod_{a\in P}x_a$.
 
 \textbf{Case $q\not\in P$.} By Lemma~\ref{lem:subword_induction}(b), $P\not\in \del_q(\Delta(Q(w),v))$. 
 
 \textbf{Subcase $\ell(vs_\sigma)>v$.} By Lemma~\ref{lem:subword_induction}, $\del_q(\Delta(Q(w),v))=\Delta(Q(ws_\sigma), v)$. By induction hypothesis, $\prod_{a\in P} x_a$ is divisible by $\LT(g)$ for some $g\in EQ_{ws_\sigma,v}$. The claim follows by Lemma~\ref{lem:main}(a).
 
 \textbf{Subcase $\ell(vs_\sigma)<v$.} By Lemma~\ref{lem:subword_induction}, $\del_q(\Delta(Q(w),v))=\Delta(Q(ws_\sigma), vs_\sigma)$. By induction hypothesis, $\prod_{a\in P} x_a$ is divisible by $\LT(g)$ for some $g\in EQ_{ws_\sigma,vs_\sigma}$. The claim follows by Lemma~\ref{lem:main}(c).
 
\end{proof}

\appendix
\section{}
\label{app}
As noted in Remark~\ref{rmk:KLinit},
Theorem~\ref{thm:knutson} is only stated  in the literature for finite type
flag varieties, and hence the appendix sketches how the proof
can be modified for the case at hand. 

Let  $(G,B,B_-,T,W)$ be a pinning for a symmetrizable Kac-Moody group $G$ over $\C$, completed along the positive roots. 
For every $w\in W$, define $X^w_\circ:=BwB/B$, $X^w:=\overline{X^w_\circ}$,  $X_w^\circ:=B_-wB/B$, $X_w:=\overline{X_w^\circ}$ and $X^v_{w,\circ}:=X^{v}_\circ\cap X_w$. We refer to \cite{kumar2017positivity} for basic properties of Richardson varieties. In this appendix we sketch an argument for \cite[Theorem 2']{knutson2008schubert}, which we restate below. We follow Knutson's notation $v\ge w$, which is the opposite of the convention in our paper. For $\alpha$ a simple root, if $vr_\alpha<v$, 
the Schubert cell $X^v_\circ$ factors $T$-equivariantly as a product of a line $L$ with weight $-v\cdot \alpha$ and a complementary hyperplane $H\cong X^{vr_\alpha}_\circ$. Let $\C^\times$ act on $X^v_\circ$ by scaling $L$. 

\begin{thm}\cite[Theorem 2']{knutson2008schubert}
    Let $v,w\in W$, $v\ge w$. If $v=1$ then $w=1$ and $X^v_{w,\circ} = B/B$. Otherwise, there is a simple root $\alpha$ such that $vr_{\alpha}<v$ in Bruhat order. 
    \begin{enumerate}[(1)]
        \item If $wr_{\alpha}>w$, then $X^v_{w,\circ}\cong X^{vr_{\alpha}}_{w,\circ} \times \mathbb{A}^1_{-v\cdot\alpha}$.
        \item If $wr_\alpha < w$ but $w\not\le vr_\alpha$, then $X^v_{w,\circ}\cong X^{vr_\alpha}_{wr_\alpha,\circ}$.
        \item If $wr_\alpha < w \le vr_\alpha$, then the scheme-theoretic limit $X':=\lim_{t\to 0} X^v_{w,\circ}$ is reduced and has two components:
        \[X'=(\Pi \times \{0\})\cup_{\Lambda\times \{0\}}(\Lambda\times \mathbb{A}^1_{-v\cdot\alpha}).\]
        where $\Pi\cong X^{vr_\alpha}_{wr_\alpha,\circ}$ and $\Lambda\cong X^{vr_\alpha}_{w,\circ}$.
    \end{enumerate}
\end{thm}
\begin{proof}[Proof sketch]
    Let $P_\alpha$ be the standard parabolic subgroup associated to $\alpha$, and let   $\pi_\alpha: G/B\to G/P_\alpha$ denote the natural projection. 
    We have $\pi_\alpha^{-1}(\pi_\alpha(X^v_\circ))=X^v_\circ \cup X^{vr_\alpha}_\circ$. In other words,
    the union $X^v_\circ \cup X^{vr_\alpha}_\circ$ is a trivial $\mathbb{P}^1$-bundle over $\pi_\alpha(X^v_\circ)=\pi_\alpha(X^{vr_\alpha}_\circ) \cong X^{vr_\alpha}_\circ$. 

    For case (1), since $w\le v$, $wr_\alpha >w$, we have $w<vr_\alpha$. Observe that since $\pi_\alpha^{-1}(\pi_\alpha(X^{vr_\alpha}_{w,\circ}))=X^v_{w,\circ}\cup X^{vr_\alpha}_{w,\circ}$ and $\pi_\alpha(X^{vr_\alpha}_{w,\circ})\cong X^{vr_\alpha}_{w,\circ}$, we have $X^v_{w,\circ}\cup X^{vr_\alpha}_{w,\circ}\cong X^{vr_\alpha}_{w,\circ} \times \mathbb{P}^1$. It follows that $X^v_{w,\circ}\cong X^{vr_{\alpha}}_{w,\circ} \times \mathbb{A}^1_{-v\cdot\alpha}$.

    For case (2), the conditions on $v$ and $w$ ensure that $\pi_\alpha$ is an isomorphism restricted to $X^v_{w,\circ}$, and the image is exactly $X^{vr_\alpha}_{wr_\alpha,\circ}$.

    For case (3), we apply the geometric vertex decomposition lemma \cite[Theorem 2]{knutson2007automatically} on $X:=X_w\cap(X^v_\circ\cup X^{vr_\alpha}_\circ)=X^{vr_\alpha}_{w,\circ}\cup X^v_{w,\circ}$  inside the ambient space $X^v_\circ\cup X^{vr_\alpha}_\circ\cong X^{vr_\alpha}_\circ\times \mathbb{P}^1 $. Since $w>wr_\alpha$, $\pi_\alpha(X)$ is generically one-to-one onto its image, which is isomorphic to $X^{vr_\alpha}_{wr_\alpha,\circ}$. It follows that $\Pi\cong X^{vr_\alpha}_{wr_\alpha,\circ}$. 
    Furthermore, under the identification $X^v_\circ\cup X^{vr_\alpha}_\circ\cong X^{vr_\alpha}_\circ\times \mathbb{P}^1 $, the Bruhat cell $X^{vr_\alpha}_\circ$ is mapped to $X^{vr_\alpha}_\circ\times \{\infty\}$, and therefore $\Lambda=X\cap (X^{vr_\alpha}_\circ\times \{\infty\})\cong X^{vr_\alpha}_{w,\circ} $. The normality and reducedness conditions required for applying the geometric vertex decomposition lemma follow from standard facts about Richardson varieties \cite{kumar2017positivity}.
\end{proof}
\bibliographystyle{alpha}
  \bibliography{ref}
    \end{document}